\newcommand{\mbP}{\mathbb P}
\newcommand{\mbZ}{\mathbb Z}
\newcommand{\mbC}{\mathbb C}
\newcommand{\mbR}{\mathbb R}
\newcommand{\ord}{\operatorname{ord}}
\newcommand{\oM}{\overline{\mathcal M}}
\newcommand{\og}{\overline g}
\newcommand{\hLambda}{\widehat\Lambda}
\def\cM{{\mathcal{M}}}
\def\oM{{\overline{\mathcal{M}}}}
\def\CP{{{\mathbb C}{\mathbb P}}}
\renewcommand{\Im}{\mathrm{Im}}
\def\d{{\partial}}
\newcommand{\eps}{\varepsilon}
\newcommand{\cA}{\mathcal A}
\newcommand{\hcA}{\widehat{\mathcal A}}
\newcommand{\DR}{\mathrm{DR}}
\newcommand{\even}{\mathrm{even}}
\newcommand{\ct}{\mathrm{ct}}
\newcommand{\Coef}{\mathrm{Coef}}
\DeclareMathOperator{\Deg}{Deg}
\newcommand{\gl}{\mathrm{gl}}
\newcommand{\tR}{\widetilde{R}}
\newcommand{\tQ}{\widetilde{Q}}
\newcommand{\hE}{\widehat{E}}
\newcommand{\un}{{1\!\! 1}}
\newcommand{\of}{\overline{f}}
\newcommand{\cR}{\mathcal{R}}
\newcommand{\pt}{\mathrm{pt}}
\DeclareMathOperator{\res}{{res}}
\newcommand{\tS}{\widetilde{S}}
\newcommand{\oB}{\overline{B}}
\newcommand{\codim}{\operatorname{codim}}
\newcommand{\spn}{\operatorname{span}}
\DeclareMathOperator{\tdeg}{\widetilde{\mathrm{deg}}}
\newcommand{\hcR}{\widehat{\mathcal{R}}}
\newcommand{\oH}{\overline{\mathcal{H}}}
\newcommand{\cH}{\mathcal{H}}
\newcommand{\ev}{\mathrm{ev}}
\newcommand{\tP}{\widetilde{P}}
\newcommand{\cB}{\mathcal{B}}
\DeclareMathOperator{\odeg}{\overline{\mathrm{deg}}}
\newcommand{\tf}{\widetilde{f}}
\newcommand{\tL}{\widetilde{L}}
\newcommand{\tw}{\widetilde{w}}
\newcommand{\mbCP}{\mathbb{C}\mathbb{P}}
\newcommand{\adm}{\mathrm{adm}}
\newtheorem{theorem}{Theorem}[section]
\newtheorem{proposition}[theorem]{Proposition}
\newtheorem{lemma}[theorem]{Lemma}
\theoremstyle{remark}
\newtheorem{remark}[theorem]{Remark}
\newtheorem{example}[theorem]{Example}
\theoremstyle{definition}
\newtheorem{definition}[theorem]{Definition}
\numberwithin{equation}{section}
\begin{document}

\title[Residueless meromorphic differentials 
and KP]{Moduli spaces of residueless meromorphic differentials 
and the KP hierarchy}

\author{Alexandr Buryak}
\address{A. Buryak:\newline 
Faculty of Mathematics, National Research University Higher School of Economics, \newline
6 Usacheva str., Moscow, 119048, Russian Federation;\smallskip\newline 
Center for Advanced Studies, Skolkovo Institute of Science and Technology, \newline
1 Nobel str., Moscow, 143026, Russian Federation;\smallskip\newline
P.G. Demidov Yaroslavl State University,\newline 
14 Sovetskaya str., Yaroslavl, 150003, Russian Federation}
\email{aburyak@hse.ru}

\author{Paolo Rossi}
\address{P. Rossi:\newline Dipartimento di Matematica ``Tullio Levi-Civita'', Universit\`a degli Studi di Padova,\newline
Via Trieste 63, 35121 Padova, Italy}
\email{paolo.rossi@math.unipd.it}

\author{Dimitri Zvonkine}
\address{Dimitri Zvonkine: \newline Universit\'e de Versailles St-Quentin, CNRS,\newline
45 Avenue des \'Etats Unis, 78000 Versailles, France}
\email{dimitri.zvonkine@gmail.fr}

\begin{abstract}
We prove that the cohomology 
classes of the moduli spaces of residueless meromorphic differentials, 
i.e., the closures, in the moduli space of stable curves, of the loci of smooth curves whose marked points are the zeros and poles of prescribed orders of a meromorphic differential with vanishing residues, form a partial cohomological field theory (CohFT) of infinite rank. To this partial CohFT we apply the double ramification hierarchy construction to produce a Hamiltonian system of evolutionary PDEs. We prove that its reduction 
to the case of differentials with exactly two zeros and any number of poles coincides with the KP hierarchy up to a change of variables. 
\end{abstract}

\date{\today}

\maketitle

\tableofcontents

\section*{Introduction}

In recent years several constructions of moduli spaces of meromorphic differentials on smooth Riemann surfaces, where both the differential and the curve are allowed to vary, have appeared in the literature. In particular, in \cite{BCGGM18,BCGGM19,Sau19} the authors constructed, with different techniques, smooth Deligne--Mumford moduli stacks parameterizing families of stable curves of genus~$g$ and with~$n$ markings, together with a meromorphic differential with poles and zeros of prescribed orders $a_1,\ldots,a_n \in \mbZ$, $\sum_{i=1}^n a_i=2g-2$, on their $n$ marked points and studied their geometry and topology. Such families have a natural univocal definition as long as the underlying curve is smooth, in which case their moduli stack, up to projectivization with respect to the multiplicative $\mbC^*$-action on the differential, can be seen as a substack $\cH_g(a_1,\ldots,a_n)$ inside $\cM_{g,n}$. The above constructions provide different compactifications and all possess natural forgetful maps to the moduli space of stable curves $\oM_{g,n}$ with respect to which their image is simply the closure $\oH_g(a_1,\ldots,a_n)$,  which is pure dimensional, but is not in general irreducible. This is in contrast, for instance, with \cite{FP18} where the authors construct a closed  pure dimensional  substack $\widetilde{\cH}_g(a_1,\ldots,a_n)$ of $\oM_{g,n}$ as a proper moduli space of twisted canonical divisors containing  $\cH_g(a_1,\ldots,a_n)$ as an open subset, but having in general irreducible components that do not lie in $\oH_g(a_1,\ldots,a_n)$.  In the strictly meromorphic case, where there exists an $a_i<0$, the moduli space $\widetilde{\cH}_g(a_1,\ldots,a_n)$  carries a natural weighted fundamental class $\mathsf{H}_{g}(a_1\ldots,a_n)$ which was shown in \cite{BHPSS20} to equal Pixton's $1$-twisted double ramification (DR) cycle $\DR_g^1(a_1,\ldots,a_n)$, defined in \cite{JPPZ17} as an explicit sum over stable graphs of tautological classes.\\

While Pixton's formula is expected to provide the weighted fundamental classes $\mathsf{H}_{g}(a_1\ldots,a_n)$ with the structure of an infinite rank partial cohomological field theory (CohFT), as already proven for the (untwisted) DR cycle in \cite{BR21c} (see also \cite{BR21b}), we cannot expect the same from the fundamental classes of $\oH_g(a_1,\ldots,a_n)$, simply for dimensional reasons. The situation however improves if we demand that all residues of the meromorphic differentials vanish. The corresponding moduli stacks and compactifications were constructed in \cite{Sau19,CMZ20} and the corresponding substack of $\oM_{g,n}$ is denoted by $\oH_g^{\res}(a_1,\ldots,a_n)$.\\

Our first result is 
that the fundamental classes of $\oH_g^{\res}(a_1,\ldots,a_n)$ (with $a_i\neq -1$ for all $1\leq i\leq n$) do indeed form an infinte rank partial CohFT. We show this in Section~\ref{section:Moduli spaces}, after introducing the necessary geometric notions and results from the aforementioned papers.\\

At this point the possibility of employing integrable systems techniques to study the intersection theory of $\oH_g^{\res}(a_1,\ldots,a_n)$ arises. In Section~\ref{section:DR hierarchy} we define the corresponding DR hierarchy and prove some of its properties, including homogeneity with respect to the appropriate grading.\\

Finally, our main result is found in Section~\ref{section:KP}, where we prove that a reduction of the DR hierarchy corresponding to moduli spaces of meromorphic differentials with exactly two zeros and any number of poles with no residues coincides with the celebrated Kadomtsev--Petviashvili (KP) hierarchy up to a Miura transformation.\\  

The precise indentification of the aforementioned reduction of the DR hierarchy for residueless meromorphic differentials with the KP hierarchy constructed via Lax operators is achieved thanks to a reconstruction theorem, also proved in Section~\ref{section:KP} of independent interest: the KP hierarchy can be uniquely reconstructed, using the properties of commutativity of the flows, homogeneity, tau-symmetry and compatibility with spatial translations, from exactly three coefficients in each component of the first nontrivial flow together with the linear terms in the dispersionless limit of all other flows.\\

Natural future developments include the identification of the full DR hierarchy for the spaces of residueless meromorphic differentials and the investigation of the Dubrovin--Zhang~\cite{DZ01} side of the correspondence of this partial cohomological field theory with integrable systems, guided by the DR/DZ equivalence conjecture \cite{Bur15,BDGR18} which predicts that the KP hierarchy and its parent hierarchy for differentials with any number of zeros should compute all intersection numbers of $\oH_g^{\res}(a_1,\ldots,a_n)$ with any monomial in the psi classes. This is material for a future work.\\

\noindent{\bf Notation and conventions.} 
\begin{itemize}

\item Throughout the text we use the Einstein summation convention for repeated upper and lower Greek indices.

\medskip

\item When it doesn't lead to a confusion, we use the symbol $*$ to indicate any value, in the appropriate range, of a sub- or superscript.

\medskip

\item For a topological space~$X$ let $H^*(X)$ denote the cohomology ring of~$X$ with the coefficients in $\mbC$.

\medskip

\item For $n\geq 0$, let $[n]:=\{1,\ldots,n\}$.

\end{itemize}

\bigskip

\noindent{\bf Acknowledgements.} 
The work of A.~B. is supported by the Russian Science Foundation (Grant no. 20-71-10110). D.~Z. is partly supported by the ANR-18-CE40-0009 ENUMGEOM grant.\\

We would like to thank Matteo Costantini, Adrien Sauvaget and Johannes Schmitt for useful discussions and guidance on the literature on moduli spaces of meromorphic differentials. We would also like to thank Michael Finkelberg for valuable comments related to the proof of Proposition~\ref{proposition:partial CohFT Hres}.\\


\section{Moduli spaces of meromorphic differentials with residue conditions}\label{section:Moduli spaces}

For two nonnegative integers $g,n$ such that $2g-2+n>0$, let $\oM_{g,n}$ be the moduli space of stable curves of genus $g$ with $n$ marked points, $\cM_{g,n}$ its open locus of smooth curves, and~$\cM^\ct_{g,n}$ the partial compactification of $\cM_{g,n}$ by curves of compact type, i.e. stable curves whose dual stable graph is a tree. Naturally $\cM_{g,n}\subset\cM^\ct_{g,n}\subset\oM_{g,n}$.\\

\subsection{Meromorphic differentials with residue conditions}

For integers $g,n,m,k\geq 0$ such that $2g-2+n+m+k>0$, fix integers $a_1,\ldots, a_n\geq 0$, $b_1,\ldots,b_m \geq 1$, $c_1,\ldots,c_k\geq 2$. The space of projectivized meromorphic differentials with vanishing residues at the last $k$ points is the subset
$$
\cH_{g}(a_1,\ldots,a_n,-b_1,\ldots,-b_m;-c_1,\ldots,-c_k)\subset \cM_{g,n+m+k}
$$
of smooth marked curves $[C;x_1,\ldots,x_{n+m+k}]$ on which there exists a meromorphic differential~$\omega$ whose associated divisor is $(\omega)=\sum_{j=1}^n a_j x_j-\sum_{j=1}^m b_j x_{n+j}-\sum_{j=1}^k c_j x_{n+m+j}$ and such that $\res_{x_{n+m+j}} \omega = 0$ for $1\leq j\leq k$. We denote its closure in $\oM_{g,n+m+k}$ by
$$
\oH_{g}(a_1,\ldots,a_n,-b_1,\ldots,-b_m;-c_1,\ldots,-c_k)\subset \oM_{g,n+m+k}.
$$
$\oH_g(a_1,\ldots,a_n,-b_1,\ldots,-b_m;-c_1,\ldots,-c_k)$ is a closed substack of $\oM_{g,n+m+k}$ of codimension $g+k$ if $m\geq 1$ and of codimension $g-1+k$ if $m=0$. It is empty unless the condition $\sum_{j=1}^n a_j - \sum_{j=1}^m b_j - \sum_{j=1}^k c_j= 2g-2$ is satisfied.\\

Notice that if $m=1$ and $[C;x_1,\ldots,x_{n+1+k}] \in \cH_g(a_1,\ldots,a_n,-b_1;-c_1,\ldots,-c_k)$, then the residue theorem implies that the meromorphic differential $\omega$ on $C$ satisfies $\res_{x_{n+1}}\omega = 0$ and hence $$\cH_g(a_1,\ldots,a_n,-b_1;-c_1,\ldots,-c_k)= \cH_g(a_1,\ldots,a_n;-b_1,-c_1,\ldots,-c_k),$$
so the case $m=1$ effectively reduces to $m=0$.\\

In the $k=0$ and $m=0$ cases, the notation can be simplified as follows.
\begin{definition}
Given $a_1,\ldots,a_n \in \mbZ$, let us introduce the following notation.
\begin{enumerate}[ 1.]
\item Denote by $\cH_g(a_1,\ldots,a_n)\subset\cM_{g,n}$ the space of projectivized meromorphic differentials, i.e. the locus in $\cM_{g,n}$ of smooth curves $[C;x_1,\ldots,x_n]$ on which there exists a meromorphic differential $\omega$ whose associated divisor is $(\omega)=\sum_{j=1}^n a_i x_i$. Denote moreover by $\oH_g (a_1,\ldots,a_n)$ its closure in $\oM_{g,n}$.

\medskip

\item Similarly, denote by $\cH_g^{\res} (a_1,\ldots,a_n)\subset \cM_{g,n}$ the space of projectivized meromorphic differentials with everywhere vanishing residues, i.e. the locus in $\cM_{g,n}$ of smooth curves $[C;x_1,\ldots,x_n]$ on which there exists a meromorphic differential $\omega$ whose associated divisor is $(\omega)=\sum_{j=1}^n a_i x_i$ and whose residues vanish at \emph{all} poles. Denote moreover by $\oH_g^{\res} (a_1,\ldots,a_n)$ its closure in~$\oM_{g,n}$.
\end{enumerate}
\end{definition} 

\bigskip

Notice that $\oH_g^{\res} (a_1,\ldots,a_n)$ is empty if $a_i = -1$ for some $1\leq i\leq n$ and unless $\sum_{i=1}^na_i=2g-2$. For an index set $I$ of finite cardinality $|I|\geq 0$ and an $|I|$-tuple of integers $a_I=(a_i)_{i\in I}\in \mbZ^{|I|}$, let $N_{a_I}:=|\{i\in I\, |\, a_i<0\}|$ be the number of negative entries of $a_I$. Then 
\begin{equation}\label{eq:dim Hres}
\codim \oH_g^{\res} (a_1,\ldots,a_n) = g-1 + N_{a_{[n]}}.
\end{equation}
We call the homology class $[\oH_g^{\res} (a_1,\ldots,a_n)]\in H_{2(2g-2-N_{a_{[n]}})}(\oM_{g,n})$ the \emph{cycle of residueless meromorphic differentials} and, by abuse of language, we will use the same name and notation for its Poincar\'e dual cohomology class $[\oH_g^{\res} (a_1,\ldots,a_n)] \in H^{2(g-1+N_{a_{[n]}})}(\oM_{g,n})$.

\bigskip

\begin{remark}
 In the strictly meromorphic case, a closed substack $\widetilde{\cH}_g(a_1,\ldots,a_n)\subset\oM_{g,n}$ containing $\oH_g(a_1,\ldots,a_n)$ was constructed in~\cite{FP18}  as a proper moduli space of twisted canonical divisors, carrying a natural weighted fundamental class $\mathsf{H}_{g}(a_1\ldots,a_n) \in H^{2g}(\oM_{g,n})$. As proven in \cite{BHPSS20}, $\mathsf{H}_{g}(a_1\ldots,a_n)$ equals Pixton's $1$-twisted double ramification (DR) cycle $\DR_g^1(a_1,\ldots,a_n)$, which is defined in \cite{JPPZ17} as an explicit sum over stable graphs of tautological classes.
\end{remark}

\bigskip

\subsection{Multiscale differentials with residue conditions}

Let us briefly review the definition and properties of the moduli space $\oH_g^{\res} (a_1,\ldots,a_n)$ from the point of view of multiscale differentials with residue conditions as treated in \cite{CMZ20}.\\


In \cite[Sections~3 and~4.1]{CMZ20} (see also~\cite[Section~2]{BCGGM19}) the authors identify the space $\cH_g^{\res} (a_1,\ldots,a_n)$ with the corresponding stratum $B^{\res}_g(a_1,\ldots,a_n)$ inside the projectivized twisted Hodge bundle
$$
\mbP\Bigg(\pi_*\omega\Bigg(-\sum_{i\in [n]\, |\, a_i<0} a_i x_i\Bigg)\Bigg),
$$
where $\omega$ is the relative dualizing sheaf of the universal curve over $\cM_{g,n}$, via its projection to~$\cM_{g,n}$. Then they construct a proper smooth Deligne--Mumford stack $\oB^{\res}_g(a_1,\ldots,a_n)$ containing $B^{\res}_g(a_1,\ldots,a_n)$ as an open dense substack whose complement is a normal crossing divisor. The stack $\oB^{\res}_g(a_1,\ldots,a_n)$ is a moduli stack for families of equivalence classes of multiscale differentials with residue conditions. Let us recall their definition.\\

In what follows, given a stable curve $C$ with associated stable graph $\Gamma_C$, we will denote its irreducible components by $C_v$ for $v\in V(\Gamma_C)$ and we will use the same notation for the marked points of $C$ and the corresponding legs of the associated stable graph $\Gamma_C$, for nodes of $C$ and the corresponding edges of $\Gamma_C$, and for branches of nodes on irreducible components $C_v$ of $C$ and the corresponding half-edges of $\Gamma_C$. Given a leg $x_i\in L(\Gamma_C)$ or a half-edge $h\in H(\Gamma)$, we denote by $v(x_i)$ or $v(h)$ the vertex to which they are attached.\\

Firstly, an \emph{enhanced level graph} is a stable graph $\Gamma$ of genus $g$ with a set $L(\Gamma)$ of $n$ marked legs together with:
\begin{enumerate}
\item a total preorder\footnote{A preorder relation $\leq$ is reflexive and transitive, but $x \leq y$ and $y \leq x$ do not necessarily imply $x=y$.} on the set $V(\Gamma)$ of vertices. We describe this preorder by a surjective level function $\ell\colon V(\Gamma)\to \{0,-1,\ldots,-L\}$. An edge is called \emph{horizontal} if it is attached to vertices on the same level and \emph{vertical} otherwise. 

\medskip

\item a function $\kappa\colon E(\Gamma)\to \mbZ_{\geq 0}$ assigning a nonnegative integer~$\kappa_e$ to each edge $e\in E(\Gamma)$, such that $\kappa_e=0$ if and only if $e$ is horizontal.
\end{enumerate}
For every level $0\leq j\leq -L$, let $C_{(j)}$ be the (possibly disconnected) stable curve obtained from~$C$ by removing all irreducible components whose level is not $j$ and let $C_{(>j)}$  be the (possibly disconnected) stable curve obtained from~$C$ by removing all irreducible components whose level is smaller than or equal to~$j$.\\

Secondly, given a meromorphic differential~$\omega$ on a smooth curve $C$ and a point $p\in C$, if~$\omega$ has order $\ord_p \omega = a\neq -1$ at $p$ then for a local coordinate $z$ in a neighborhood of $p$ such that $z(p)=0$ we have, locally, $\omega = (cz^a + O(z^{a+1}))dz$ for some $c\in \mbC^*$. Then the $k=|a+1|$ roots $\zeta$ such that $\zeta^{a+1}=c^{-1}$ determine $k$ projectivized vectors $\left.\zeta\frac{\d}{\d z}\right|_p\in T_pC/\mbR_{>0}$ (if $a\geq 0$) or $\left.-\zeta\frac{\d}{\d z}\right|_p\in T_pC/\mbR_{>0}$ (if $a<-1$) which are called \emph{outgoing} or \emph{incoming prongs} of~$\omega$, respectively. The set of outgoing (resp. incoming) prongs at $p$ is denoted by $P^\mathrm{out}_p$ (resp. $P^\mathrm{in}_p$).\\

Thirdly, a \emph{multiscale differential} of profile $(a_1,\ldots,a_n)\in \mbZ^n$, with $\sum_{i=1}^n a_i = 2g-2$, on a stable curve~$C$ of genus $g$ with $n$ marked points $x_1,\ldots,x_n$, with \emph{zero residues} at $x_1,\ldots,x_n\in C$ consists of:
\begin{enumerate}
\item a structure of enhanced level graph $(\Gamma_C, \ell, \kappa)$ on the dual graph $\Gamma_C$ of $C$ (where a node is said to be vertical or horizontal if the corresponding edge is);

\medskip

\item a collection of meromorphic differentials $\omega_v$, one on each irreducible component~$C_v$ of~$C$, $v\in V(\Gamma_C)$, holomorphic and non-vanishing outside of marked points and nodes, such that the following conditions are satisfied:
\begin{itemize}
\item[(i)] $\ord_{x_i}\omega_{v(x_i)} = a_i$, $1\leq i \leq n$.

\smallskip

\item[(ii)] $\res_{x_i} \omega_{v(x_i)}=0, 1\leq i \leq n$.

\smallskip

\item[(iii)] If $q_1\in C_{v_1}$ and $q_2 \in C_{v_2}$, $v_1,v_2\in V(\Gamma_C)$, form a node $e\in E(\Gamma_C)$, then
\begin{equation*}
\ord_{q_1}\omega_{v_1}+\ord_{q_2}\omega_{v_2}=-2.
\end{equation*}

\smallskip

\item[(iv)] If $q_1\in C_{v_1}$ and $q_2 \in C_{v_2}$, $v_1,v_2\in V(\Gamma_C)$, form a node $e\in E(\Gamma_C)$, then $\ell(v_1)\ge \ell(v_2)$ if and only if $\ord_{q_1}\omega_{v_1}\ge -1$. Together with the previous property, this implies that $\ell(v_1)=\ell(v_2)$ if and only if $\ord_{q_1}\omega_{v_1}=-1$.

\smallskip

\item[(v)] If $q_1\in C_{v_1}$ and $q_2 \in C_{v_2}$, $v_1,v_2\in V(\Gamma_C)$, form a horizontal node $e\in E(\Gamma_C)$ (i.e. $\kappa_e=0$), then
\begin{equation}\label{eq:residue condition at horizontal nodes}
\res_{q_1} \omega_{v_1}+\res_{q_2} \omega_{v_2}=0.
\end{equation}

\smallskip

\item[(vi)] For every level $-1\leq j\leq -L$ of $\Gamma_C$ and for every connected component $Y$ of $C_{(>j)}$,
\begin{equation}\label{eq:residue condition at vertical nodes}
\sum_{q\in Y\cap C_{(j)}} \res_{q^-} \omega_{v(q^-)} =0,
\end{equation}
where $q^+\in Y$ and $q^-\in C_{(j)}$ form the vertical node $q \in Y\cap C_{(j)}$.
\end{itemize}

\medskip

\item a cyclic order reversing bijection $\sigma_q\colon P^\mathrm{in}_{q^-} \to P^\mathrm{out}_{q^+}$ for each vertical node $q$ formed by identifying $q^-$ on the upper level with $q^+$ on the lower level, where $\kappa_q=|P^\mathrm{in}_{q^-}|=|P^\mathrm{out}_{q^+}|$.
\end{enumerate}

\bigskip

\begin{remark}\label{remark:R-space}
Using notation from~\cite[Section~4.1]{CMZ20}, condition (2)(vi) is a reformulation of the $\mathfrak{R}$-global residue condition in the particular case when $\lambda$ is the partition of $H_p$ in one-element subsets and $\lambda_{\mathfrak{R}}=\lambda$.
\end{remark}

\bigskip

Lastly, there is an action of the universal cover of the torus $\mbC^{L}\to (\mbC^*)^{L}$ on multiscale residueless differentials by rescaling the differentials with strictly negative levels and rotating the prong matchings between levels accordingly, producing fractional Dehn twists. The stabilizer of this action is called the \emph{twist group} of the enhanced level graph and denoted by $\mathrm{Tw}_\Gamma$. Two multiscale residueless differentials  are defined to be equivalent if they differ by the action of $T_\Gamma:=\mbC^{L}/\mathrm{Tw}_\Gamma$. By further quotienting by the action of $\mbC^*$-rescaling the differentials on all levels and leaving all prong-matchings untouched, we obtain equivalence classes of projectivized multiscale residueless differentials.\\

As a special case of \cite[Proposition 4.2]{CMZ20} (corresponding to the choice of $\mathfrak{R}$ described in Remark~\ref{remark:R-space}), we have the following result.

\begin{proposition}\cite{CMZ20}\label{proposition:moduli of multiscale}
\begin{enumerate}[ 1.]

\item Given $a_1,\ldots,a_n \in \mbZ$, there is a proper smooth Deligne--Mumford stack $\oB^{\res}_g(a_1,\ldots,a_n)$ containing $B^{\res}_g(a_1,\ldots,a_n)$ as an open dense substack whose complement is a normal crossing divisor. $\oB^{\res}_g(a_1,\ldots,a_n)$ is a moduli stack for families of equivalence classes of projectivized multiscale residueless differentials. Its dimension is
$$
\dim \oB^{\res}_g(a_1,\ldots,a_n)= 2g-2+n-N_{a_{[n]}}.
$$

\medskip

\item We denote the closure of the stratum parameterizing multiscaled differentials whose enhanced level graph is $(\Gamma,\ell,\kappa)$ by $D_{(\Gamma,\ell,\kappa)}$ or simply by $D_\Gamma$. Then $D_\Gamma$ is a proper smooth closed substack of $\oB^{\res}_g(a_1,\ldots,a_n)$ of codimension
$$
\codim D_{\Gamma} = h+L,
$$
where~$h$ is the number of horizontal edges in $(\Gamma,\ell,\kappa)$ and~$L+1$ is the number of levels.
\end{enumerate}
\end{proposition}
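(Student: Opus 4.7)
The plan is to derive this proposition directly from Proposition~4.2 of~\cite{CMZ20} by specializing the $\mathfrak{R}$-global residue condition considered there to the choice highlighted in Remark~\ref{remark:R-space}. I would proceed in three steps.

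First, I would identify $\oB^{\res}_g(a_1,\ldots,a_n)$ with the CMZ20 stack $\oB^{\mathfrak{R}}_g(a_1,\ldots,a_n)$ for the $\mathfrak{R}$ in which the partition $\lambda$ of the set of poles is into singletons and $\lambda_{\mathfrak{R}}=\lambda$. Unwinding the definition of an $\mathfrak{R}$-global residue condition in \cite[Section~4.1]{CMZ20} for this particular choice: within each level, the condition reduces to demanding that each individual residue at each marked pole vanishes, while across levels it imposes, for every connected component $Y$ of $C_{(>j)}$, the vanishing of the sum of residues of $\omega_{v(q^-)}$ at the points $q^-$ where $Y$ meets $C_{(j)}$. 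These are exactly conditions (2)(ii) and (2)(vi) in the definition of a multiscale residueless differential, so the two moduli functors coincide.

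Second, once this matching of moduli functors is in place, all the structural assertions of the proposition — properness, smoothness as a Deligne--Mumford stack, the fact that $B^{\res}_g(a_1,\ldots,a_n)$ sits inside $\oB^{\res}_g(a_1,\ldots,a_n)$ as an open dense substack with normal crossing complement, the proper smooth stratification by $D_\Gamma$, and the codimension formula $\codim D_\Gamma = h+L$ — are direct transcriptions of the corresponding statements in \cite[Proposition~4.2]{CMZ20}; no additional argument is needed.

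Third, I would establish the dimension formula. By construction $B^{\res}_g(a_1,\ldots,a_n)$ is isomorphic, via the forgetful map to $\cM_{g,n+m+k}$ followed by projectivization, to the locus $\cH^{\res}_g(a_1,\ldots,a_n) \subset \cM_{g,n}$, whose codimension is given by \eqref{eq:dim Hres}. Hence
\begin{equation*}
\dim B^{\res}_g(a_1,\ldots,a_n)= (3g-3+n) - (g-1+N_{a_{[n]}}) = 2g-2+n-N_{a_{[n]}},
\end{equation*}
and since the boundary is a normal crossing divisor this coincides with the dimension of the whole stack. The only genuinely delicate step is the definitional bookkeeping in the first paragraph: one has to verify, by working through the description of the $\mathfrak{R}$-global residue condition in \cite{CMZ20}, that the singleton partition really does yield pointwise vanishing of each residue together with~\eqref{eq:residue condition at vertical nodes}, and not a strictly weaker combination. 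Everything else is a direct invocation of the cited theorem.
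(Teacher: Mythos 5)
Your proposal is correct and follows essentially the same route as the paper, which offers no proof of its own beyond citing \cite[Proposition~4.2]{CMZ20} specialized to the choice of $\mathfrak{R}$ in Remark~\ref{remark:R-space} (the singleton partition), exactly as you describe. Your extra derivation of the dimension formula from \eqref{eq:dim Hres} is consistent with the paper's later observation that $p$ restricts to an isomorphism $B^{\res}_g(a_1,\ldots,a_n)\cong\cH^{\res}_g(a_1,\ldots,a_n)$, though in \cite{CMZ20} the dimension is stated directly.
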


\bigskip

There is a forgetful map $p\colon\oB^{\res}_g(a_1,\ldots,a_n)\to \oM_{g,n}$ associating to a projectivized multiscale differential on a stable curve $C$ the stable curve itself. It restricts to an isomorphism of Deligne--Mumford stacks $p\colon B^{\res}_g(a_1,\ldots,a_n)\to \cH_g^{\res}(a_1,\ldots,a_n)\subset\cM_{g,n}$ and, clearly, 
$$
[\oH_g^{\res} (a_1,\ldots,a_n)]= p_*[\oB^{\res}_g(a_1,\ldots,a_n)].
$$
We will use the above description of the boundary stratification of $\oB^{\res}_g(a_1,\ldots,a_n)$ to understand the intersection of $[\oH_g^{\res} (a_1,\ldots,a_n)]$ with the boundary stratum of stable curves with one separating node.\\

\subsection{The class $[\oH_g^{\res} (a_1,\ldots,a_n)]$ as a partial cohomological field theory}

Recall the following generalization from \cite{LRZ15} of the notion of cohomological field theory (CohFT) from~\cite{KM94}.

\begin{definition}
A \emph{partial CohFT} is a system of linear maps $c_{g,n}\colon V^{\otimes n} \to H^\even(\oM_{g,n})$, for all pairs of nonnegative integers $(g,n)$ in the stable range $2g-2+n>0$, where $V$ is an arbitrary finite dimensional $\mbC$-vector space, called the \emph{phase space}, together with a special element $e_\un\in V$, called the \emph{unit}, and a symmetric nondegenerate bilinear form $\eta\in (V^*)^{\otimes 2}$, called the \emph{metric}, such that, chosen any basis $\{e_\alpha\}_{\alpha\in A}$ of $V$, $|A|=\dim V$, the following axioms are satisfied:
\begin{itemize}
\item[(i)] The maps $c_{g,n}$ are equivariant with respect to the $S_n$-action permuting the $n$ copies of~$V$ in $V^{\otimes n}$ and the $n$ marked points in $\oM_{g,n}$, respectively.

\medskip

\item[(ii)] $\pi^* c_{g,n}( \otimes_{i=1}^n e_{\alpha_i}) = c_{g,n+1}(\otimes_{i=1}^n  e_{\alpha_i}\otimes e_\un)$ for $\alpha_1,\ldots,\alpha_n\in A$, where $\pi\colon\oM_{g,n+1}\to\oM_{g,n}$ is the map that forgets the last marked point.\\
Moreover $c_{0,3}(e_{\alpha}\otimes e_\beta \otimes e_\un) =\eta(e_\alpha\otimes e_\beta) =:\eta_{\alpha\beta}$ for $\alpha,\beta\in A$, where we identify $H^*(\oM_{0,3})=H^*(\pt)=\mbC$.

\medskip

\item[(iii)] $\gl^* c_{g_1+g_2,n_1+n_2}( \otimes_{i=1}^n e_{\alpha_i}) = c_{g_1,n_1+1}(\otimes_{i\in I} e_{\alpha_i} \otimes e_\mu)\eta^{\mu \nu} c_{g_2,n_2+1}( \otimes_{j\in J} e_{\alpha_j}\otimes e_\nu)$ for $2g_1-1+n_1>0$, $2g_2-1+n_2>0$, and $\alpha_1,\ldots,\alpha_n\in A$, where $I \sqcup J=[n]$, $|I|=n_1$, $|J|=n_2$, and $\gl\colon\oM_{g_1,n_1+1}\times\oM_{g_2,n_2+1}\to \oM_{g_1+g_2,n_1+n_2}$ is the corresponding gluing map and where~$\eta^{\alpha\beta}$ is defined by $\eta^{\alpha \mu}\eta_{\mu \beta} = \delta^\alpha_\beta$ for $\alpha,\beta\in A$.
\end{itemize}
\end{definition}

\bigskip

\begin{definition}
A \emph{CohFT} is a partial CohFT $c_{g,n}\colon V^{\otimes n} \to H^\even(\oM_{g,n})$ such that the following extra axiom is satisfied:
\begin{itemize}
\item[(iv)] $\gl^* c_{g+1,n}(\otimes_{i=1}^n e_{\alpha_i}) = c_{g,n+2}(\otimes_{i=1}^n e_{\alpha_i}\otimes e_{\mu}\otimes e_\nu) \eta^{\mu \nu}$, where  $\gl\colon\oM_{g,n+2}\to \oM_{g+1,n}$ is the gluing map, which increases the genus by identifying the last two marked points.
\end{itemize}
\end{definition}

\bigskip

\begin{definition}\label{definition:homogeneous partial CohFT}
A partial CohFT $c_{g,n}\colon V^{\otimes n} \to H^\even(\oM_{g,n})$ is called \emph{homogeneous} if~$V$ is a graded vector space with a homogeneous basis $\{e_\alpha\}_{\alpha\in A}$, with $q_\alpha:=\deg e_\alpha$, the metric $\eta$ on~$V$, seen as the map $\eta\colon V^{\otimes 2}\to \mbC$, is homogeneous with $\delta:=-\deg \eta$, $\deg e_\un=0$ and complex constants~$r^\alpha$ for $\alpha\in A$ and $\gamma$ exist such that the following condition is satisfied:
\begin{gather}\label{eq:homogeneous partial CohFT}
\Deg c_{g,n}(\otimes_{i=1}^ne_{\alpha_i})+\pi_* c_{g,n+1}(\otimes_{i=1}^ne_{\alpha_i}\otimes r^\alpha e_\alpha)=\left(\sum_{i=1}^n q_{\alpha_i}+\gamma g-\delta\right)c_{g,n}(\otimes_{i=1}^ne_{\alpha_i}),
\end{gather}
where $\Deg\colon H^*(\oM_{g,n})\to H^*(\oM_{g,n})$ is the operator that acts on $H^i(\oM_{g,n})$ by multiplication by~$\frac{i}{2}$ and $\pi\colon\oM_{g,n+1}\to\oM_{g,n}$ forgets the last marked point. The constant $\gamma$ 
is called the \emph{conformal dimension} of our partial CohFT.
\end{definition}

\bigskip

When a homogeneous partial CohFT is a CohFT, the loop axiom enforces the condition $\gamma=\delta$.\\

As remarked in \cite[Section 3]{BR21b}, a sufficient condition for the definition of a partial CohFT to make sense when $V$ is countably generated, say $V:=\mathrm{span}\left(\{e_\alpha\}_{\alpha\in\mbZ}\right)$, i.e. $A=\mbZ$ in the above definition, is that the set $\{\alpha_n\in \mbZ| c_{g,n}(\otimes_{i=1}^{n} e_{\alpha_i})\neq 0\}$ is finite for every $g,n$ in the stable range and $\alpha_1,\ldots,\alpha_{n-1}\in\mbZ$, and that $\eta_{\alpha \beta}$ has a unique two-sided inverse $\eta^{\alpha \beta}$.

\bigskip

Let us introduce the notation $\mbZ^\star:=\mbZ\setminus\{-1\}$.

\begin{proposition}\label{proposition:partial CohFT Hres}
Let $V:=\mathrm{span}\left(\{e_\alpha\}_{\alpha\in\mbZ^\star}\right)$ and let $\eta$ be the nondegenerate symmetric bilinear form on $V$ given by $\eta_{\alpha\beta}=\eta(e_\alpha\otimes e_\beta):=\delta_{\alpha+\beta,-2}$. For $g,n\geq 0$ and $2g-2+n>0$, the classes $c_{g,n}\colon V^{\otimes n} \to H^\even(\oM_{g,n})$, with
\begin{equation}\label{eq:partial CohFT Hres}
c_{g,n}(e_{\alpha_1}\otimes\ldots\otimes e_{\alpha_n}):=[\oH_g^{\res}(\alpha_1,\ldots,\alpha_n)] \in H^{2(g-1+N_{\alpha_{[n]}})}(\oM_{g,n}),\qquad \alpha_1,\ldots,\alpha_n\in\mbZ^\star,
\end{equation}
form an infinite rank homogeneous partial CohFT with unit $e_0$, metric $\eta$, and, with notations as in Definition~\ref{definition:homogeneous partial CohFT}, $q_\alpha=0$ if $\alpha\ge 0$ and $q_\alpha=1$ if $\alpha\le -2$, $r^\alpha=0$ for all $\alpha \in \mbZ^\star$, and $\gamma=\delta=1$.
\end{proposition}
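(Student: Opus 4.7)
The plan is to verify the four axioms of a homogeneous partial CohFT. Axiom (i) (the $S_n$-equivariance) is immediate since $\oH_g^{\res}(\alpha_1,\ldots,\alpha_n)\subset\oM_{g,n}$ is manifestly invariant under simultaneous permutation of orders and marked points. The homogeneity identity \eqref{eq:homogeneous partial CohFT} with $r^\alpha = 0$ reduces to the codimension formula \eqref{eq:dim Hres}: the class $c_{g,n}(\otimes e_{\alpha_i})$ has $\Deg$-eigenvalue $g-1+N_{\alpha_{[n]}}$; since $\sum_i q_{\alpha_i} = N_{\alpha_{[n]}}$ by the definition of $q_\alpha$, and $\gamma g - \delta = g-1$, the two sides agree.

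For the unit axiom (ii), the base case $c_{0,3}(e_\alpha\otimes e_\beta\otimes e_0) = \eta_{\alpha\beta}$ is directly verified on $\mbP^1$: a meromorphic differential with divisor $\alpha\,x_1+\beta\,x_2$ exists uniquely up to scalar iff $\alpha+\beta = -2$, and since $\alpha,\beta\neq -1$ the local expansion of the form $c\,z^\beta\,dz$ at $x_2$ has automatically vanishing residue, so $\oH_0^{\res}(\alpha,\beta,0) = \oM_{0,3}$ precisely when $\alpha+\beta=-2$. For the general case $\pi^* c_{g,n}(\otimes e_{\alpha_i}) = c_{g,n+1}(\otimes e_{\alpha_i}\otimes e_0)$, observe that $\cH_g^{\res}(\alpha_{[n]},0)\subset\cM_{g,n+1}$ is manifestly the open dense subset of $\pi^{-1}(\cH_g^{\res}(\alpha_{[n]}))$ obtained by fiberwise removing the zeros and poles of the differential; flatness and properness of $\pi$ then yield $\pi^{-1}(\oH_g^{\res}(\alpha_{[n]})) = \oH_g^{\res}(\alpha_{[n]},0)$ after passing to closures.

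The splitting axiom (iii) is the main obstacle. The degree constraint $\mu + \sum_{i\in I}\alpha_i = 2g_1-2$ fixes $\mu\in\mbZ$ uniquely, so the right-hand side collapses to the single product $[\oH_{g_1}^{\res}(\alpha_I,\mu)]\otimes[\oH_{g_2}^{\res}(\alpha_J,-2-\mu)]$ when $\mu\in\mbZ^\star$, and vanishes when $\mu=-1$. To compute $\gl^*[\oH_g^{\res}(\alpha_{[n]})]$, I would lift to the multiscale compactification via the proper forgetful map $p\colon\oB^{\res}_g(\alpha_{[n]})\to\oM_{g,n}$ and use Proposition~\ref{proposition:moduli of multiscale} to enumerate the boundary divisors $D_\Gamma$ of $\oB^{\res}_g$ whose image under $p$ lies in $\gl(\oM_{g_1,n_1+1}\times\oM_{g_2,n_2+1})$. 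The relevant enhanced level graphs $(\Gamma,\ell,\kappa)$ are those with exactly one separating edge, either horizontal at a single level or vertical between two levels. In both cases, conditions (2)(ii) and (2)(vi) of the multiscale definition combined with the residue theorem on each side of the node force the two restricted differentials to be residueless with orders $\mu$ and $-2-\mu$ at the branches; the value $\mu=-1$ is automatically excluded, since a simple pole with vanishing residue cannot exist. A natural gluing map $\oB^{\res}_{g_1}(\alpha_I,\mu)\times\oB^{\res}_{g_2}(\alpha_J,-2-\mu)\to D_\Gamma$ produced from the multiscale data on each side, followed by pushforward along $p$, should recover the desired product class. The hardest step will be the multiplicity bookkeeping in this last identification: one must carefully balance the residual $\mbC^*$-rescaling on the product against the twist group $\mathrm{Tw}_\Gamma$ and the prong matchings $\sigma_q$ in the vertical case, and rule out any spurious components of $p^{-1}$ of the separating divisor that do not come from a single-edge splitting of multiscale data.
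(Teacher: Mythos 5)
Your overall strategy matches the paper's: the $S_n$-equivariance and homogeneity checks are correct as you state them, the $c_{0,3}$ computation on $\mbP^1$ is the same, and for the splitting axiom you correctly propose passing to the multiscale compactification $\oB^{\res}_g$, classifying the boundary strata $D_\Gamma$ lying over the separating divisor, and using the degree constraint to pin down the order $\mu$ at the node (with $\mu=-1$ excluded). You also correctly anticipate that the horizontal-edge graphs contribute nothing and that graphs with several vertices on a level must be discarded; the paper disposes of the former because a simple pole with all other residues zero violates the residue theorem, and of the latter because the fibers of $p|_{D_\Gamma}$ are positive-dimensional (one can rescale the differential on one vertex of a level relative to the others without changing the underlying curve), so the image has codimension at least $2$ in $\oM_{g,n}$ and cannot contribute to a divisorial pullback.

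The genuine gap is the step you yourself flag and then leave unresolved: proving that the multiplicity $m$ in $\sigma^*[\oH^{\res}_g(\alpha_{[n]})] = m\,[\oH^{\res}_{g_1}(\alpha_I,\mu)]\times[\oH^{\res}_{g_2}(\alpha_J,-2-\mu)]$ equals $1$. Moreover, you locate the difficulty in the wrong place: once one knows $T_\Gamma=\mbC^*$, the map $\widetilde\sigma\colon\oB^{\res}_{g_1}\times\oB^{\res}_{g_2}\to D_\Gamma$ is an isomorphism, so the twist group and prong matchings are not where a spurious factor would arise. The real issue is the intersection multiplicity of the cycle $\oH^{\res}_g(\alpha_{[n]})$ with the boundary divisor inside $\oM_{g,n}$, i.e., generic transversality of that intersection along $\oH^{\res}_{g_1}\times\oH^{\res}_{g_2}$. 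The paper proves this by an explicit local construction: near a pair of points in the smooth loci $S_1\subset\oH^{\res}_{g_1}$, $S_2\subset\oH^{\res}_{g_2}$, choose coordinates $z$, $w$ at the branches of the node in which the residueless differentials are $d(z^k)$ and $d(w^{-k})$, and plumb via $zw=\varepsilon$; then in local coordinates $U_1\times V_1\times U_2\times V_2\times\Delta$ on $\oM_{g,n}$ the stratum $\oH^{\res}_g$ is $U_1\times\{0\}\times U_2\times\{0\}\times\Delta$ (because $\alpha$ and $\varepsilon^k\beta$ glue to a residueless differential on the plumbed curve) while the boundary is $\{\varepsilon=0\}$, making the transversality manifest. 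Without this (or an equivalent) argument, the splitting axiom — the heart of the proposition — is not established. A smaller issue: your derivation of the unit axiom by "flatness plus passing to closures" needs the cycle-theoretic statement $\pi^*[\oH^{\res}(\alpha_{[n]})]=[\oH^{\res}(\alpha_{[n]},0)]$ with multiplicity one, not just a set-theoretic equality of preimages; the paper secures this via a fiber-product and birationality argument using the faithfully flat lift $\widetilde\pi$ between the multiscale spaces.
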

\begin{proof}
First note that for fixed $g$, $n$ in the stable range and $\alpha_1, \dots, \alpha_{n-1} \in \mathbb{Z}^*$ the set $\{\alpha_n\in\mbZ^\star| c_{g,n}(\otimes_{i=1}^{n} e_{\alpha_i})\neq 0\}$ is indeed finite (actually composed of one element) thanks to the fact that $[\oH_g^{\res}(\alpha_1,\ldots,\alpha_n)]=0$ unless $\sum_{i=1}^n \alpha_i = 2g-2$. Further, $\eta_{\alpha\beta}=\delta_{\alpha+\beta,-2}$ has a unique two-sided inverse, namely $\eta^{\alpha\beta}=\delta_{\alpha+\beta,-2}$.\\

$S_n$-equivariance of the linear maps $c_{g,n}$ is clear from the definition.\\

On the marked curve $(\mbC\mbP^1;0,\infty,1)$ a (unique up to a multiplicative constant) meromorphic differential whose divisor is $\alpha[0]+\beta[\infty]+0[1]$ exists if $\beta=-\alpha-2$ and is given by $\omega=z^\alpha dz$, which shows that $c_{0,3}(e_\alpha\otimes e_\beta \otimes e_0) = \delta_{\alpha+\beta,-2}$. Let us compute $c_{g,n+1}(\otimes_{i=1}^n e_{\alpha_i}\otimes e_0)$ when $2g-2+n>0$. Consider the lift $\widetilde{\pi}\colon\oB^{\res}_g(\alpha_1,\ldots,\alpha_n,0)\to\oB^{\res}_g(\alpha_1,\ldots,\alpha_n)$ of $\pi\colon\oM_{g,n+1}\to\oM_{g,n}$ through $p\colon\oB^{\res}_g(\alpha_1,\ldots,\alpha_n)\to \oM_{g,n}$. Since $\oB_g^{\res}(\alpha_1,\ldots,\alpha_n,0)$ is the moduli stack of projectivized multiscale differentials where the last marked point is unconstrained (neither a zero nor a pole), we have that $\widetilde{\pi}$ is faithfully flat. Consider then the fiber product $X$ of $\oB^{\res}_g(\alpha_1,\ldots,\alpha_n)$ and $\oM_{g,n+1}$ over $\oM_{g,n}$, denoting the two projections by $a$ and $b$, respectively. Since $\pi$ is faithfully flat and $p$ is proper, then $a$ is faithfully flat and $b$ is proper and we have $\pi^*p_*=b_*a^*$ in the Chow group. Moreover the maps $\widetilde{\pi}$ and $p$ induce a proper birational morphism $f\colon\oB^{\res}_g(\alpha_1,\ldots,\alpha_n,0)\to X$ with $p =b f$ and $\widetilde{\pi}= af$. Now, always working in the Chow group, we have $\widetilde{\pi}^*[\oB^{\res}_g(\alpha_1,\ldots,\alpha_n)] = [\oB_g^{\res}(\alpha_1,\ldots,\alpha_n,0)]$ and $a^*[\oB^{\res}_g(\alpha_1,\ldots,\alpha_n)]=[X]$ by faithful flatness of $\widetilde{\pi}$ and $a$, while $f_*[\oB_g^{\res}(\alpha_1,\ldots,\alpha_n,0)] = [X]$ by birationality of $f$. Then we conclude that $c_{g,n+1}(\otimes_{i=1}^n e_{\alpha_i}\otimes e_0)=p_*[\oB_g^{\res}(\alpha_1,\ldots,\alpha_n,0)]=p_*\widetilde{\pi}^*[\oB^{\res}_g(\alpha_1,\ldots,\alpha_n)] = b_*f_* \widetilde{\pi}^* [\oB^{\res}_g(\alpha_1,\ldots,\alpha_n)]= b_*a^*[\oB^{\res}_g(\alpha_1,\ldots,\alpha_n)]=\pi^*p_*[\oB^{\res}_g(\alpha_1,\ldots,\alpha_n)]=\pi^*c_{g,n}(\otimes_{i=1}^n e_{\alpha_i})$ in Chow and hence in cohomology.\\

Next, we are interested in $\sigma^*c_{g,n}(\otimes_{i=1}^n e_{\alpha_i})$ where $\sigma\colon\oM_{g_1,|I|+1}\times\oM_{g_2,|J|+1}\to\oM_{g,n}$ is the natural boundary map with $g_1+g_2=g$ and $I\sqcup J=[n]$. The preimage $p^{-1}\left( \sigma\left(\oM_{g_1,|I|+1}\times\oM_{g_2,|J|+1}\right)\right)$ is a normal crossing divisor of $\oB^{\res}_g(\alpha_1,\ldots,\alpha_n)$,  which is the union of strata  of the form~$D_{\Gamma}$ with $\Gamma$ being either a one level connected graph with two vertices and one horizontal edge, a two level connected graph with one vertex per level, one vertical edge and no horizontal edges, or a two level connected graph with at least two vertices on at least one of the levels and no horizontal edges.\\

In the first case $D_{\Gamma}$ is actually empty: horizontal nodes correspond to simple poles and these are forbidden by the residue theorem, since all other poles are at marked points, where residues are set to zero.\\

In the third case the  stratum $D_\Gamma$ projects to a stratum of $\oH^{\res}_g(\alpha_1,\ldots,\alpha_n)$ of codimension at least $2$ because the fibers of $p|_{D_\Gamma}$ are of dimension at least $1$  (given a multiscale differential whose underlying level graph has at least two vertices on the same level not connected by horizontal nodes, one can always rescale the meromorphic differential on one vertex relative to the ones on vertices of the same level without changing the underlying stable curve).\\

 In the second case, notice that if $D_\Gamma\ne\emptyset$, then for the only edge $e\in E(\Gamma)$ identifying the two points $q^-\in C_{(-1)}$ and $q^+\in C_{(0)}$, we have $\kappa_e=|2g_1-1-\sum_{i\in I}a_i|=|2g_2-1-\sum_{j\in J}a_j|\ne 0$ and $\res_{q^-}\omega_{v(q^-)}=0$, and moreover $2g_1-1-\sum_{i\in I}a_i$ is positive if and only if the vertex of~$\Gamma$ of level~$0$ is incident to the legs marked by $I$. Since $T_{\Gamma}=\mbC^*$ in this case, this shows that there is a morphism $\widetilde{\sigma}\colon\oB^{\res}_{g_1}(\alpha_I,2g_1-2-\sum_{i\in I}a_i)\times\oB^{\res}_{g_2}(\alpha_J,2g_2-2-\sum_{j\in J}a_j)\to \oB^{\res}_g(\alpha_1,\ldots,\alpha_n)$ lifting $\sigma$, which is an isomorphism onto its image $D_{\Gamma}$, and therefore $\sigma^{-1}\left(\oH^{\res}_g(\alpha_1,\ldots,\alpha_n)\right)=\oH^{\res}_{g_1}(\alpha_I,2g_1-2-\sum_{i\in I}a_i)\times\oH^{\res}_{g_2}(\alpha_J,2g_2-2-\sum_{j\in J}a_j)$.\\

The above considerations show that, denoting $\kappa:=2g_1-1-\sum_{i\in I}a_i$, we have
$$
\sigma^*c_{g,n}(\otimes_{i=1}^n e_{\alpha_i})=
\begin{cases}
0,&\text{if $\kappa=0$},\\
m\, c_{g_1,|I|+1}(\otimes_{i\in I}e_{\alpha_i}\otimes e_{\kappa-1})c_{g_2,|J|+1}(\otimes_{j\in J}e_{\alpha_j}\otimes e_{-\kappa-1}),&\text{if $\kappa\ne 0$},
\end{cases}
$$
and the fact that $m=1$ in the second case is equivalent to the fact that the intersection of~$\oH^{\res}_g(\alpha_1,\ldots,\alpha_n)$ with the image of $\sigma$ along $\oH_{g_1}^{\res}(\alpha_I,\kappa-1)\times \oH^{\res}_{g_2}(\alpha_J,-\kappa-1)$ is generically transversal.\\

Denote by $S_1$ and $S_2$ the smooth parts of $\oH^{\res}_{g_1}(\alpha_I,\kappa-1)$ and $\oH^{\res}_{g_2}(\alpha_J,-\kappa-1)$, respectively. Denote also $S:=\oH_g^{\res}(\alpha_1,\ldots,\alpha_n)$ for brevity. Let us show that the intersection of $S$ with the image of $\sigma$ is transversal along $S_1 \times S_2$.\\

Pick points $p_1 \in S_1$ and $p_2 \in S_2$. Denote by $p \in \oM_{g,n}$ the point $\sigma(p_1, p_2)$. By the smoothness of stratum $S_1$, we can choose local coordinates $U_1 \times V_1$ on $\oM_{g_1,|I|+1}$ in the neighborhood of~$p_1$ such that $S_1 = U_1 \times \{0\}$. We choose local coordinates $U_2 \times V_2$ in the neighborhood of $p_2$ in~$\oM_{g_2, |J|+1}$ in the same way. Denote by $\Delta \subset \mathbb{C}$ the unit disc. We claim that we can choose local coordinates $U_1 \times V_1 \times U_2 \times V_2 \times \Delta$ on $\oM_{g,n}$ in the neighborhood of $p$ such that the stratum~$S$ is $U_1 \times \{0\} \times U_2 \times \{0\} \times \Delta$ and the image of $\sigma$ is $U_1 \times V_1 \times U_2 \times V_2 \times \{ 0 \}$. The transversality of the intersection is then obvious. So let us describe the choice of local coordinates.\\

Every curve $C_1$ in $U_1 \times \{0\}$ carries a residueless meromorphic differential. It is unique up to a multiplicative constant. Choose this constant in some way over $U_1$ and denote the meromorphic differential by~$\alpha$. Similarly, denote by $\beta$ the meromorphic differential on a curve $C_2$ of $U_2 \times \{0\}$. At the marked points to be glued into a node there is a local coordinate $z$ on $C_1$ and $w$ on $C_2$ such that $\alpha = d(z^k)$, $\beta = d (w^{-k})$. The choice of such local coordinates is unique up to the multiplication by a $k$th root of unity; we fix one uniform choice over all of $U_1$ and~$U_2$. We extend the local coordinates $z$ and $w$ to curves in $U_1 \times V_1$ and $U_2 \times V_2$ in an arbitrary way. Now, to a curve $C_1 \in U_1 \times V_1$, a curve $C_2 \in U_2 \times V_2$, and a number $\varepsilon \in \Delta$ we assign the curve obtained by removing the neighborhoods of the marked points $z=0$ and $w=0$ and gluing in the ``waist'' $zw = \varepsilon$. In the case when $C_1 \in U_1 \times \{0\}$ and $C_2 \in U_2 \times \{0\}$, the curve thus obtained does carry a residueless meromorphic differential, because $\alpha$ and $\varepsilon^k \beta$ agree on the waist. Thus the stratum~$S$ is indeed given by $U_1 \times \{0\} \times U_2 \times \{0\} \times \Delta$, while the image of $\sigma$ is $\{ \varepsilon =0 \}$.\\

 We conclude that $\sigma^*c_{g,n}(\otimes_{i=1}^n e_{\alpha_i}) = \sum_{\alpha\in \mbZ^\star}c_{g_1,|I|+1}(\otimes_{i\in I}e_{\alpha_i}\otimes e_\alpha)c_{g_2,|J|+1}(\otimes_{j\in J}e_{\alpha_j}\otimes e_{-\alpha-2})$, as required.\\

Finally, from formula~\eqref{eq:dim Hres} we obtain $\Deg c_{g,n}(\otimes_{i=1}^n e_{\alpha_i})= (g-1+N_{\alpha_{[n]}})c_{g,n}(\otimes_{i=1}^n e_{\alpha_i})$, which shows that with the constants $q_\alpha=0$ if $\alpha\geq 0$ and $q_\alpha=1$ if $\alpha\leq -2$, and $\gamma=\delta=1$, which are compatible with $\deg e_0=0$ and $\deg \eta=-\delta$, equation~\eqref{eq:homogeneous partial CohFT} is satisfied, thus completing the proof.
\end{proof}

\bigskip

\section{The DR hierarchy for the cycle of residueless meromorphic differentials}\label{section:DR hierarchy}

Here we briefly review the notion of double ramification (DR) hierarchy for a partial CohFT and then apply this construction to the partial CohFT formed by the cycles of residueless meromorphic differentials.\\

In~\cite{Bur15}, the first named author introduced a construction associating an integrable Hamiltonian system of evolutionary PDEs to a given CohFT. In \cite{BDGR18} it was proved that the same construction also works for partial CohFTs and, in \cite{BR21b}, the first example of DR hierarchy associated to an infinite rank partial CohFT was computed. Finally, in \cite{BR21a,ABLR21}, the construction was generalized to associate an integrable system of evolutionary PDEs to any F-CohFT (a generalization of the notion of partial CohFT introduced in \cite{BR21a} and further studied in \cite{ABLR20}). Although this last generalization will not be needed in this paper, it has several points in common with a reduction of the DR hierarchy associated to the infinte rank partial CohFT \eqref{eq:partial CohFT Hres} (the reduction corresponding to only considering the spaces of meromorphic differentials with exactly two zeros), which we will study in Section~\ref{section:two zeros}.\\

Let~$\psi_i\in H^2(\oM_{g,n})$ be the $i$-th \emph{psi class}, i.e. the first Chern class of the tautological line bundle over~$\oM_{g,n}$ whose fiber at a stable curve is the cotangent line at its $i$-th marked point. Let~$\lambda_j\in H^{2j}(\oM_{g,n})$ be the $j$-th \emph{Hodge class}, i.e. the $j$-th Chern class of the Hodge bundle~$\mathbb E$, which is the rank~$g$ vector bundle over~$\oM_{g,n}$ whose fiber at a stable curve is its space of holomorphic one-forms.\\

For any $a_1,\dots,a_n\in \mbZ$, $\sum_{i=1}^n a_i =0$, let $\DR_g(a_1,\ldots,a_n) \in H^{2g}(\oM_{g,n})$ be the (untwisted) {\it double ramification (DR) cycle}. The DR cycle is the pushforward, through the forgetful map to $\oM_{g,n}$, of the virtual fundamental class of the moduli space of projectivized stable maps to~$\CP^1$ relative to~$0$ and~$\infty$, with ramification profile $a_1,\ldots,a_n$ at the marked points (see, e.g.,~\cite{BSSZ15} for more details). 
More precisely, the pushforward itself lies in $H_{2(2g-3+n)}(\oM_{g,n})$, while its Poincar\'e dual cohomology class lies in $H^{2g}(\oM_{g,n})$. By abuse of notation, we will denote both the pushforward and its Poincar\'e dual by $\DR_g(a_1,\ldots,a_n)$.\\ 

The restriction $\DR_g(a_1,\ldots,a_n)\big|_{\cM_{g,n}^{\ct}}$, where we recall that $\cM_{g,n}^{\ct}$ is the moduli space of stable curves of compact type, is a homogeneous polynomial in $a_1,\ldots,a_n$ of degree $2g$ with the coefficients in~$H^{2g}(\cM_{g,n}^{\ct})$ (see, e.g.,~\cite{JPPZ17}). Polynomiality of the DR cycle on~$\cM^\ct_{g,n}$ together with the fact that~$\lambda_g$ vanishes on~$\oM_{g,n}\setminus\cM_{g,n}^{\ct}$ (see, e.g.,~\cite[Section~0.4]{FP00}) implies that the cohomology class $\lambda_g\DR_g\left(-\sum_{j=1}^n a_j,a_1,\ldots,a_n\right) \in H^{4g}(\oM_{g,n+1})$ is a degree $2g$ homogeneous polynomial in the coefficients $a_1,\ldots,a_n$.\\

Let $\hcA_A$ and $\hLambda_A$ be the spaces of \emph{differential polynomials} and \emph{local functionals} in formal variables~$u^\alpha_k$, $\alpha \in A$, $k\geq 0$, and $\eps$, where $A$ is an index set (as above, finite or countable) and, in the case of finite~$A$, the definitions and the notations are taken from the paper~\cite[Section 2.1]{Ros17}. A minor adjustment is needed in order to include the case of countable $A$ in our considerations. The ring $\mbC[[u^*_*]]$ is graded by the differential grading $\deg_{\d_x} u^\alpha_k:=k$, and the degree $d$ part of it is denoted by $\cA_A^{[d]}$. We then define $\cA_A:=\oplus_{d\ge 0}\cA_A^{[d]}$, $\hcA_A:=\cA_A[[\eps]]$, and $\hLambda_A:=\hcA_A\left/\left(\d_x \hcA_A \oplus \mbC[[\eps]]\right)\right.$ where $\d_x:=\sum_{k\geq 0}u^\alpha_{k+1} \frac{\d}{\d u^\alpha_k}$. We denote the image of $f\in\hcA_A$ through the natural projection to $\hLambda_A$ by $\of=\int f dx$. Assigning $\deg_{\d_x}\eps:=-1$, the degree~$d$ parts of~$\hcA_A$ and~$\hLambda_A$ are denoted by~$\hcA_A^{[d]}$ and~$\hLambda^{[d]}_A$, respectively.\\

\begin{remark}
In the case of finite $A$ we have $\hcA_A=\mbC[[u^*]][u^*_{>0}][[\eps]]$ where $u^\alpha:=u^\alpha_0$, which is the standard way to introduce the space of differential polynomials, but for countable $A$ we have $\hcA_A\ne\mbC[[u^*]][u^*_{>0}][[\eps]]$. 
\end{remark}

\bigskip

Given a partial CohFT $c_{g,n}\colon V^{\otimes n} \to H^\even(\oM_{g,n})$ with $V=\spn\left(\{e_\alpha\}_{\alpha \in A}\right)$, unit $e_\un$, and metric $\eta$, the Hamiltonian densities for the associated DR hierarchy are the generating series~\cite{BR16a}
\begin{equation}\label{eq:DR densities}
g_{\alpha,d}:=\sum_{\substack{g,n\geq 0\\
2g-1+n>0}}\frac{\eps^{2g}}{n!}\sum_{k_1,\ldots,k_n \geq 0}\Coef_{a_1^{k_1}\ldots a_n^{k_n}}\left(\int_{\DR_g\left(-\sum_{i=1}^n a_i,a_1\ldots,a_n\right)}\hspace{-2.cm}\lambda_g \psi_1^d c_{g,n+1}(e_\alpha\otimes\otimes_{i=1}^n e_{\alpha_i})\right)\prod_{i=1}^n u^{\alpha_i}_{k_i} \in \hcA^{[0]}_A,
\end{equation}
where $\alpha\in A$ and $d\in \mbZ_{\geq 0}$. To this definition one can add $g_{\alpha,-1}:=\eta_{\alpha\mu}u^\mu$, $\alpha\in A$. The \emph{Hamiltonians} of the DR hierarchy are the local functionals $\og_{\alpha,d}\in \hLambda^{[0]}_A$, $\alpha\in A$, $d\geq -1$. By a result of \cite{Bur15}, the Hamiltonians of the DR hierarchy are in involution with respect to the Poisson brackets on $\hLambda_A$ defined by $\{\of,\og\}=\int \left(\frac{\delta \of}{\delta u^\alpha} \eta^{\alpha\beta} \d_x \frac{\delta \og}{\delta u^\beta}\right) dx$ for any two local functionals $\of,\og \in \hLambda_A$, that is $\{\og_{\alpha_1,d_1},\og_{\alpha_2,d_2}\} =0$ for all $\alpha_1,\alpha_2 \in A$ and $d_1,d_2\geq -1$.\\

This implies that the infinite system of evolutionary PDEs, called the \emph{DR hierarchy},
\begin{equation}\label{eq:DR equations}
\frac{\d u^\alpha}{\d t^\beta_d} = \eta^{\alpha\mu} \d_x \frac{\delta \og_{\beta,d}}{\delta u^\mu}, \qquad \alpha,\beta \in A,\quad d\geq 0,
\end{equation}
where, for any $\of\in \hLambda_A$,
$$
\frac{\delta \of}{\delta u^\alpha}:=\sum_{k\geq 0} (-\d_x)^k\frac{\d f}{\d u^\alpha_k}, \qquad \alpha\in A,
$$
satisfies the compatibility conditions $\frac{\d}{\d t^{\beta_2}_{d_2}}\frac{\d u^\alpha}{\d t^{\beta_1}_{d_1}}=\frac{\d}{\d t^{\beta_1}_{d_1}}\frac{\d u^\alpha}{\d t^{\beta_2}_{d_2}}$ for all $\alpha,\beta_1,\beta_2 \in A$ and $d_1,d_2\geq 0$.\\

In \cite{BR16a, BDGR18, BDGR19} the authors showed that the DR hierarchy of a partial CohFT is a hierarchy of DR type, which means in particular that it is a tau-symmetric Hamiltonian system and its Hamiltonian densities can be reconstructed uniquely from the Hamiltonian $\og_{\un,1}$ only, via a universal recursion equation.\\

If the partial CohFT $c_{g,n}\colon V^{\otimes n} \to H^\even(\oM_{g,n})$ is homogeneous, with notations as in Definition \ref{definition:homogeneous partial CohFT}, consider the Euler differential operator on $\hcA_A$
\begin{equation*}
\hE:=\sum_{k\geq 0}\left(\left(1-q_\alpha\right)u^\alpha_k+\delta_{k,0} r^\alpha\right) \frac{\d}{\d u^\alpha_k}+\frac{1-\gamma}{2} \eps \frac{\d}{\d \eps}.
\end{equation*}
Then it follows easily from dimension counting in the integral appearing in equation \eqref{eq:DR densities} that
\begin{align}\label{eq:Euler on DR densities}
\hE (g_{\alpha,d}) = (d+2+q_\alpha-\delta) g_{\alpha,d}+r^\mu c_\mu^{\alpha\nu}g_{\nu,d-1}, \qquad \alpha\in A,\quad d\geq 0,
\end{align}
where $c_\mu^{\alpha\nu}:=\eta^{\alpha \beta} \eta^{\nu\gamma} c_{0,3}(e_\mu\otimes e_\beta\otimes e_\gamma)\in \mbC$ for all $\mu,\alpha,\nu \in A$.\\

Let us apply the DR hierarchy construction to the partial CohFT of Proposition \ref{proposition:partial CohFT Hres}.

\begin{proposition}
Let us endow the ring $\hcA_{\mbZ^\star}$ with the triple grading
\begin{equation}\label{eq:degree of variables}
\odeg u^{\alpha}_k:= 
\begin{cases}
(k,1,-\alpha),&\text{if $\alpha\geq 0$},\\ 
(k,0,-\alpha),&\text{if $\alpha\leq -2$},
\end{cases} \qquad \odeg\eps: = (-1,0,1). 
\end{equation}
Then the Hamiltonian densities of the DR hierarchy associated to the homogeneous partial CohFT of Proposition \ref{proposition:partial CohFT Hres} satisfy
\begin{equation}\label{eq:degree of DR densities}
\odeg g_{\alpha,d}=
\begin{cases}
(0,d+1,\alpha+2),&\text{if $\alpha\geq 0$},\\
(0,d+2,\alpha+2),&\text{if $\alpha\le -2$},
\end{cases} \qquad d\geq -1.
\end{equation}
\end{proposition}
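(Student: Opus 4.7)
The plan is to verify directly that every monomial $\eps^{2g}\prod_{i=1}^n u^{\alpha_i}_{k_i}$ contributing nontrivially to the formula~\eqref{eq:DR densities} for $g_{\alpha,d}$ has the prescribed tri-degree. The three coordinates of $\odeg$ will be pinned down by three independent constraints: the support condition of the partial CohFT class from Proposition~\ref{proposition:partial CohFT Hres}, the homogeneity of $\lambda_g\DR_g$ in the ramification data, and a dimension match on $\oM_{g,n+1}$. The argument will then reduce to a bookkeeping exercise against these three constraints.

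For the third coordinate, I would invoke the vanishing condition from Proposition~\ref{proposition:partial CohFT Hres}: the class $[\oH_g^{\res}(\alpha,\alpha_1,\ldots,\alpha_n)]$ vanishes unless $\alpha+\sum_{i=1}^n\alpha_i=2g-2$. Combined with $\odeg\eps=(-1,0,1)$, this forces the third coordinate of the monomial, $2g-\sum_i\alpha_i$, to equal $\alpha+2$, matching both subcases of~\eqref{eq:degree of DR densities}. For the first coordinate, I would next use the homogeneity statement recalled just before the definition of $g_{\alpha,d}$, namely that $\lambda_g\DR_g(-\sum a_j,a_1,\ldots,a_n)\in H^{4g}(\oM_{g,n+1})$ is a homogeneous polynomial of degree $2g$ in $a_1,\ldots,a_n$. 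Only multi-indices with $\sum_i k_i = 2g$ therefore contribute, so the first coordinate $\sum_i k_i-2g$ vanishes as required.

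Finally, for the second coordinate, I would perform a codimension count on $\oM_{g,n+1}$: the product of $\lambda_g$ (codim $g$), $\psi_1^d$ (codim $d$), $[\oH_g^{\res}(\alpha,\alpha_1,\ldots,\alpha_n)]$ (codim $g-1+N$ by~\eqref{eq:dim Hres}), and the cycle $\DR_g$ (codim $g$) can pair to a nonzero number only if the total codimension equals $\dim\oM_{g,n+1}=3g-2+n$, where $N$ is the number of strictly negative entries among $(\alpha,\alpha_1,\ldots,\alpha_n)$. This yields $N=n-1-d$; splitting $N=[\alpha\leq -2]+N^-$ with $N^-:=|\{i:\alpha_i\leq -2\}|$ (using that $\alpha\in\mbZ^\star$ excludes $-1$), the second coordinate $|\{i:\alpha_i\geq 0\}|=n-N^-$ then equals $d+1+[\alpha\leq -2]$, matching both subcases of~\eqref{eq:degree of DR densities}. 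I do not expect any serious obstacle: each of the three inputs is already established in the paper or cited from the literature, and the proof is essentially a careful match of the tri-grading against these three constraints.
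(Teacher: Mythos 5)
Your proof is correct and follows essentially the same route as the paper: the third and first coordinates are pinned down by exactly the two facts the paper cites (vanishing of $[\oH_g^{\res}]$ unless $\sum\alpha_i=2g-2$, and polynomiality of degree $2g$ of $\lambda_g\DR_g$, i.e.\ $g_{\alpha,d}\in\hcA^{[0]}$), while your dimension count on $\oM_{g,n+1}$ for the second coordinate is precisely the computation underlying equation~\eqref{eq:Euler on DR densities}, which the paper invokes instead of redoing it. The only (trivial) point left implicit in both arguments is the separate check of the case $d=-1$, where $g_{\alpha,-1}=u^{-\alpha-2}$ by definition.
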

\begin{proof} 
Notice that the first entry in the triple degree $\odeg$ coincides with $\deg_{\d_x}$. The three entries in the triple degree of equation \ref{eq:degree of DR densities} then follow easily from the fact that $g_{\alpha,d}\in \hcA^{[0]}_{\mbZ^\star}$, from equation \eqref{eq:Euler on DR densities}, and from the fact that $c_{g,n}(e_\alpha\otimes\otimes_{i=1}^n e_{\alpha_i})=0$ unless $-\sum_{i=1}^n \alpha_i +2g = \alpha+2$, respectively.
\end{proof}


\bigskip

\section{A reduction to meromorphic differentials with two zeros and the KP hierarchy}\label{section:two zeros}\label{section:KP}

In this section we describe a reduction of the DR hierarchy for the cycles of residueless meromorphic differentials. As we will see, this reduction does not respect the Poisson structure, in the sense that it is only defined at the level of vector fields. As the main result of the paper, we will prove that the reduction coincides with the KP hierarchy up to a Miura transformation.\\ 

\subsection{A reduction of the DR hierarchy}

Consider the DR hierarchy for the partial CohFT formed by the cycles of residueless meromorphic differentials with:
\begin{gather}\label{eq:DR for meromorphic}
\frac{\d u^{\alpha}}{\d t^\beta_d} = \d_x \frac{\delta\og_{\beta,d}}{\delta u^{-\alpha-2}}, \qquad \alpha,\beta\in\mbZ^\star,\quad d\geq 0.
\end{gather}

\begin{proposition}
The subset of flows of the DR hierarhy \eqref{eq:DR for meromorphic}
\begin{equation}\label{eq:primary flows}
\frac{\d u^{\alpha}}{\d t^\beta_0} = \d_x \frac{\delta \og_{\beta,0}}{\delta u^{-\alpha-2}}, \qquad \alpha\in \mbZ^\star,\quad \beta\geq 0,
\end{equation}
preserves the submanifold $\{u^\alpha_k=0, \alpha,k\geq 0\}$.
\end{proposition}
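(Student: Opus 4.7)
The plan is to exploit the second component of the triple grading $\odeg$ from \eqref{eq:degree of variables}. This component assigns $1$ to $u^\alpha_k$ with $\alpha\ge 0$, $0$ to $u^\alpha_k$ with $\alpha\le -2$, and $0$ to $\eps$, so the second degree of a monomial in $\hcA_{\mbZ^\star}$ simply counts its factors with nonnegative upper index. The submanifold $\{u^\alpha_k=0,\ \alpha\ge 0,\ k\ge 0\}$ is precisely the common zero locus of all monomials of strictly positive second degree. Therefore, to establish invariance under \eqref{eq:primary flows} it suffices to show that, for $\alpha,\beta\ge 0$, every monomial of $\d_x\frac{\delta\og_{\beta,0}}{\delta u^{-\alpha-2}}$ has strictly positive second degree.

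The first step would be to invoke \eqref{eq:degree of DR densities}, which gives $\odeg g_{\beta,0}=(0,1,\beta+2)$ for $\beta\ge 0$. Reading off the second entry, every monomial of $g_{\beta,0}$ contains exactly one factor $u^\gamma_k$ with $\gamma\ge 0$. The second step would be to check that both operations producing the right-hand side of \eqref{eq:primary flows} preserve this property monomial by monomial. Since $\alpha\ge 0$, the index $\gamma:=-\alpha-2$ satisfies $\gamma\le -2$, so each partial $\frac{\d}{\d u^\gamma_k}$ entering $\frac{\delta}{\delta u^\gamma}=\sum_{k\ge 0}(-\d_x)^k\frac{\d}{\d u^\gamma_k}$ only strips off a factor with negative upper index and hence does not decrease the second degree. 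The operator $\d_x$, acting by Leibniz and by $u^\mu_k\mapsto u^\mu_{k+1}$, manifestly preserves the second degree of every monomial, and the same applies to the outer $\d_x$. Consequently every monomial of $\d_x\frac{\delta\og_{\beta,0}}{\delta u^{-\alpha-2}}$ still contains a factor $u^\mu_\ell$ with $\mu\ge 0$ and vanishes upon restriction to the submanifold.

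I do not anticipate a substantial obstacle here: the argument is essentially a direct bookkeeping of the homogeneity already established in the previous proposition. The only delicate point is to verify that neither the variational derivative nor the outer $\d_x$ can lower the second degree of a monomial, which is immediate from the definitions once one observes that all indices being differentiated are $\le -2$ and that $\d_x$ leaves the upper index of each factor untouched.
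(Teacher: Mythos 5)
Your proposal is correct and follows essentially the same route as the paper: both arguments rest on the second component of the triple grading, observing that $\odeg g_{\beta,0}=(0,1,\beta+2)$ forces every monomial of $g_{\beta,0}$ to carry exactly one factor $u^\gamma_k$ with $\gamma\ge 0$, and that differentiating with respect to $u^{-\alpha-2}_k$ (second degree $0$) and applying $\d_x$ preserve this, so the right-hand side vanishes on $\{u^{\ge 0}_*=0\}$. Your write-up is if anything slightly more explicit than the paper's about why the variational derivative and the outer $\d_x$ cannot lower the second degree.
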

\begin{proof}
The statement is equivalent to 
$$
\left. \frac{\d u^{\alpha}}{\d t^\beta_0}  \right|_{u^{\geq 0}_*=0}= \left. \d_x \frac{\delta \og_{\beta,0}}{\delta u^{-\alpha-2}}\right|_{u^{\geq 0}_*=0}=0, \qquad \alpha,\beta\geq 0.
$$
Since, by \eqref{eq:degree of DR densities}, $\odeg g_{\beta,0}=(0,1,\beta+2)$ for $\beta\geq 0$ and, by \eqref{eq:degree of variables}, $\odeg u^{-\alpha-2}_k=(k,0,\alpha+2)$ for $\alpha\geq 0$, we have
$$
\odeg\frac{\d g_{\beta,0}}{\d u^{-\alpha-2}_k}=(-k,1,\beta-\alpha), \qquad \alpha,\beta\geq 0.
$$
But, again, $\odeg u^\gamma_k=(k,0,-\gamma)$ for $\gamma\le -2$, which implies 
$$
\left. \frac{\d g_{\beta,0}}{\d u^{-\alpha-2}_k}\right|_{u^{\geq 0}_*=0}=0, \qquad \alpha,\beta\geq 0.
$$
This implies
$$
\left. \frac{\delta \og_{\beta,0}}{\delta u^{-\alpha-2}}\right|_{u^{\geq 0}_*=0}=0, \qquad \alpha,\beta\geq 0,
$$
as desired.
\end{proof}

\bigskip

Let us summarize our considerations regarding the above reduction and also introduce more convenient notation.\\

Let $u_\alpha^{(k)}:=u^{-\alpha-1}_k$, $u_\alpha:= u_\alpha^{(0)}$, and $t^\alpha:= t^{\alpha-1}_0$, for $\alpha\geq 1$, $k\geq 0$. Consider the ring $\cR_u:=\mbC[u_*^{(*)}]$ and the following three gradings on it:
\begin{itemize}
\item The differential grading $\deg_{\d_x} u^{(k)}_\alpha:=k$. The corresponding homogeneous component of~$\cR_u$ of degree $d$ will be denoted by $\cR^{[d]}_u$. 

\medskip

\item A grading $\deg$ is given by $\deg u^{(k)}_\alpha:=\alpha+1+k$.

\medskip

\item A grading $\tdeg$ is given by $\tdeg u^{(k)}_\alpha:=1$. The corresponding homogeneous component of~$\cR_u$ of degree $d$ will be denoted by $\cR_{u;d}$. We will also use the notation $\cR_{u;\ge l}:=\bigoplus_{d\ge l}\cR_{u;d}$.
\end{itemize}
Let $\cR_u^\ev:=\bigoplus_{d\ge 0}\cR^{[2d]}_u$. We extend the three gradings to the ring $\hcR_u:=\cR_u[\eps]$ by
$$
\deg_{\d_x}\eps:=-1,\qquad \deg\eps:=0,\qquad \tdeg\eps:=0.
$$
Let $\hcR^\ev_u:=\cR^\ev_u[\eps]$.\\

\begin{theorem}\label{theorem:reduction}
For two integers $\alpha, \beta \geq 1$, consider the generating series
\begin{align}
P_{\alpha\beta}:=&\sum_{g\geq 0,\,n\geq 1}\frac{\eps^{2g}}{n!}\sum_{k_1,\ldots,k_n\geq 0}\prod_{i=1}^n u_{\alpha_i}^{(k_i)}\times\label{eq:polynomials P}\\
&\times\Coef_{a_1^{k_1}\ldots a_n^{k_n}}\left(\int_{\DR_g\left(-\sum_{i=1}^n a_i,0,a_1\ldots,a_n\right)}\hspace{-2.3cm}\lambda_g\left[\oH^{\res}_g(\alpha-1,\beta-1,-\alpha_1-1,\ldots,-\alpha_n-1)\right]\right).\notag
\end{align}
Then $P_{\alpha\beta}\in\hcR_{u;\ge 1}^{\ev;[0]}$ with $\deg P_{\alpha\beta}=\alpha+\beta$ and the system of equations
\begin{equation}\label{eq:reduction}
\frac{\d u_\alpha}{\d t^\beta} = \d_x P_{\alpha\beta}, \qquad \alpha,\beta\geq 1,
\end{equation}
satisfies the compatibility condition $\frac{\d}{\d t^{\beta_2}}\frac{\d u_\alpha}{\d t^{\beta_1}}=\frac{\d}{\d t^{\beta_1}}\frac{\d u_\alpha}{\d t^{\beta_2}}$ for all $\alpha,\beta_1,\beta_2 \geq 1$. Moreover, the polynomials $P_{\alpha\beta}$ satisfy the property
\begin{gather}
P_{1,\beta}-u_\beta\in\Im\left(\d_x^2\right),\label{eq:property of P}
\end{gather}
\end{theorem}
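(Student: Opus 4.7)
The plan is to derive the theorem as the restriction of the DR hierarchy of Proposition~\ref{proposition:partial CohFT Hres} to the submanifold $\{u^\alpha_k=0:\alpha\geq 0\}$, whose invariance under the primary flows was just established. My first step is to prove the identification
\[
P_{\alpha\beta}=\left.\frac{\delta\overline{g}_{\beta-1,0}}{\delta u^{\alpha-1}}\right|_{u^{\geq 0}_*=0}.
\]
This I would obtain from the standard geometric realisation of the variational derivative in DR-hierarchy theory: the operator $\frac{\delta}{\delta u^{\alpha-1}}$ inserts in the density formula for $g_{\beta-1,0}$ one extra marked point with DR ramification $0$ and CohFT label $\alpha-1$. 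The reduction kills all terms whose remaining CohFT labels are not in $\mbZ_{\leq -2}$, and after relabelling $u^{-\gamma-1}_k=u_\gamma^{(k)}$ and using the $S_n$-symmetry of $[\oH^{\res}_g(\cdot)]$ to reorder the labels $\alpha-1,\beta-1$, one recovers precisely the defining formula for $P_{\alpha\beta}$.

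The grading claims then follow from a direct degree count. The $\eps^{2g}$ factor gives even $\eps$-parity; the dimension condition for $[\oH^{\res}_g(\alpha-1,\beta-1,-\alpha_1-1,\ldots,-\alpha_n-1)]$ yields $\sum_i\alpha_i+n=\alpha+\beta-2g$; and because $\lambda_g$ vanishes off $\cM^{ct}_{g,n+2}$, Hain's formula shows that $\lambda_g\DR_g$ is a homogeneous polynomial of degree $2g$ in its ramification arguments, forcing $\sum_i k_i=2g$ and hence $\deg_{\d_x}=0$ on each monomial. Combining these gives $\deg P_{\alpha\beta}=\alpha+\beta$, and $\tdeg\geq 1$ is automatic from $n\geq 1$. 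Compatibility of the restricted flows then follows at once from the involutivity of the full DR Hamiltonians $\overline{g}_{\alpha,d}$ together with the invariance of the reduction submanifold.

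The most delicate part is $P_{1,\beta}-u_\beta\in\Im(\d_x^2)$. I would proceed by cases on $(g,n)$. For $n=1$, $g=0$ the only contribution is $u_\beta$ itself, obtained by evaluating $[\oH^{\res}_0(0,\beta-1,-\beta-1)]$ on $(\mbCP^1;0,\infty,1)$ via $\omega=z^{\beta-1}dz$. For $n=1$, $g\geq 1$ the constraint $\sum_i k_i=2g$ forces $k_1=2g\geq 2$, so the monomial is a multiple of $\d_x^{2g}u_{\beta-2g}\in\Im(\d_x^2)$. For the nonlinear part ($n\geq 2$) I would first apply a string-type identity $\frac{\delta\overline{g}_{\beta-1,0}}{\delta u^0}\equiv g_{\beta-1,-1}=u_\beta$ modulo $\d_x\hcR_u$ to conclude $P_{1,\beta}-u_\beta\in\Im(\d_x)$; writing $P_{1,\beta}-u_\beta=\d_xQ$, the parity $P_{1,\beta}\in\hcR^{\ev}_u$ puts $Q\in\hcR^{\odd}_u$, and the upgrade to $\Im(\d_x^2)$ reduces to showing $Q\in\Im(\d_x)$, equivalently that $Q$ represents zero in $\hLambda_u$.

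This last upgrade is where I expect the main obstacle. I would try to establish it either by refining the string equation at the density level so that the total-derivative correction is itself a $\d_x$-derivative, or geometrically by applying the unit-axiom rewriting $[\oH^{\res}_g(0,\beta-1,\ldots)]=\pi_1^*[\oH^{\res}_g(\beta-1,\ldots)]$ and analysing the forgetful pushforward $\pi_{1*}\bigl(\lambda_g\DR_g(-\sum a_i,0,a_*)\bigr)$: Hain's formula together with $\pi_*\pi^*=0$ kill the leading contribution, leaving only boundary corrections coming from the difference between $\DR_g$ on $\cM^{ct}_{g,n+2}$ and its pull-back from $\cM^{ct}_{g,n+1}$. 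Checking that these surviving monomials always carry at least two $\d_x$-derivatives on the remaining $u$-variables is the technical heart of the argument.
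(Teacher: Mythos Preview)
Your overall strategy---identify $P_{\alpha\beta}$ as the restriction of $\frac{\delta\overline g_{\beta-1,0}}{\delta u^{\alpha-1}}$ and read off the gradings and compatibility from the ambient DR hierarchy---is exactly the paper's. The difference lies in the argument for $P_{1,\beta}-u_\beta\in\Im(\d_x^2)$.

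You propose a two-step route (string equation to get $\Im(\d_x)$, then a parity upgrade to $\Im(\d_x^2)$), and you correctly flag the upgrade as the obstacle: the implication ``$Q\in\hcR^{\odd}_u\Rightarrow Q\in\Im(\d_x)$'' is false in general, so that step genuinely needs more input. Your option~(b) is the right input, and it is precisely what the paper does---but in one stroke, without the string/parity detour and without a case split on~$n$. Using the unit axiom to write $[\oH^{\res}_g(0,\beta-1,\dots)]=\pi^*[\oH^{\res}_g(\beta-1,\dots)]$ (with $\pi$ forgetting the \emph{first} marked point, the one carrying DR weight $-\sum a_i$), the integral becomes an integral against $\lambda_g\,\pi_*\DR_g(-\sum a_i,0,a_1,\dots,a_n)$. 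The paper then invokes \cite[Lemma~5.1]{BDGR18}, which says this class is a polynomial in $a_1,\dots,a_n$ divisible by $(\sum_i a_i)^2$. Under the standard dictionary ``multiplication by $\sum_i a_i$ on the polynomial $\leftrightarrow$ $\d_x$ on the generating series'', divisibility by $(\sum a_i)^2$ is \emph{exactly} the statement $P_{1,\beta}-u_\beta\in\Im(\d_x^2)$; the $(g,n)=(0,1)$ term is the sole exception and gives~$u_\beta$.

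So your plan is correct but circuitous: your ``technical heart'' (checking surviving monomials carry at least two $\d_x$'s) is equivalent to reproving the $(\sum a_i)^2$-divisibility of $\lambda_g\pi_*\DR_g$, and once you have that, neither the string equation nor the parity argument is needed.
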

\begin{proof}
The system \eqref{eq:reduction} is nothing but the restriction of the system \eqref{eq:primary flows} to the submanifold $\{u^\alpha_k=0, \alpha,k\geq 0\}$, expressed in the new variables $u_\alpha^{(k)}$, $\alpha\geq 1$, $k\geq 0$, which form a system of coordinates on it. Compatibility and degree conditions follow from those for the DR hierarchy via the change of coordinates. In particular the degree conditions guarantee that~$P_{\alpha\beta}$ belongs to the subring~$\hcR_{u;\ge 1}^{\ev;[0]}$ of the ring~$\mbC[[u_*^{(*)}]][[\eps]]$, for all $\alpha,\beta\geq 1$.\\

Equation~\eqref{eq:property of P} follows from \eqref{eq:polynomials P} where, for $\alpha=1$ and unless $g=0$ and $n=1$, we have 
\begin{align*}
&\int_{\DR_g\left(-\sum_{i=1}^n a_i,0,a_1\ldots,a_n\right)}\hspace{-2.3cm}\lambda_g\left[\oH^{\res}_g(0,\beta-1,-\alpha_1-1,\ldots,-\alpha_n-1)\right]=\\
=&\int_{\pi_*\DR_g\left(-\sum_{i=1}^n a_i,0,a_1\ldots,a_n\right)}\hspace{-2.3cm}\lambda_g\left[\oH^{\res}_g(\beta-1,-\alpha_1-1,\ldots,-\alpha_n-1)\right],
\end{align*}
where $\pi\colon\oM_{g,n+2}\to\oM_{g,n+1}$ forgets the first marked point, and from the fact, proven in \cite[Lemma 5.1]{BDGR18}, that $\lambda_g\pi_*\DR_g\left(-\sum_{i=1}^n a_i,0,a_1\ldots,a_n\right)$ is a polynomial in the variables $a_1,\ldots,a_n$ divisible by $\left(\sum_{i=1}^n a_i\right)^2$.
\end{proof}

\bigskip

\subsection{The Miura transformation} 

The degree condition $\deg P_{1,\alpha}=\alpha+1$ together with the property~\eqref{eq:property of P} implies that the difference $P_{1,\alpha}-u_\alpha$ depends only on the variables $u^{(*)}_\beta$ with $\beta\le \alpha-2$ and on $\eps$. Therefore, the polynomial change of variables $u_\alpha\mapsto v_\alpha\big(u^{(*)}_*,\eps\big):=P_{1,\alpha}$ is invertible. We refer to this change of variables as Miura transformation, following the terminology of \cite{DZ01}.\\ 

Since $P_{1,\alpha}-u_\alpha\in\Im(\d_x)$, the system~\eqref{eq:reduction} has the following form in the new variables $v_\alpha$, $\alpha\ge 1$:
\begin{gather}\label{eq:v-system}
\frac{\d v_\alpha}{\d t^\beta}=\d_x Q_{\alpha\beta},
\end{gather}
where, by the theorem,
\begin{align}
&Q_{\alpha\beta}\in\hcR_{v;\ge 1}^{\ev;[0]},\label{eq:property1}\\
&\deg Q_{\alpha\beta}=\alpha+\beta,\label{eq:property2}\\
&Q_{\alpha,1}=Q_{1,\alpha}=v_\alpha,\label{eq:property3}\\
&Q_{\alpha\beta}=Q_{\beta\alpha}.\label{eq:property4}
\end{align}

\bigskip

\subsection{The KP hierarchy}

Let us briefly recall the construction of the KP hierarchy and some of its properties. A more detailed introduction can be found, for example, in~\cite{Dic03}.\\ 

Consider formal variables~$f_i^{(j)}$, $i\ge 1$, $j\ge 0$, and the associated ring $\cR_f$. A \emph{pseudo-differential operator} $A$ is a Laurent series
$$
A=\sum_{n=-\infty}^m a_n\d_x^n,\qquad m\in\mbZ,\quad a_n\in\cR_f.
$$
Let $A_+:=\sum_{n=0}^m a_n\d_x^n$ and $\res A:=a_{-1}$. The product of pseudo-differential operators is defined by the following commutation rule:
\begin{gather*}
\d_x^k\circ a\coloneqq\sum_{l=0}^\infty\frac{k(k-1)\ldots(k-l+1)}{l!}(\d_x^l a)\d_x^{k-l},\qquad a\in\cR_f,\quad k\in\mbZ,
\end{gather*}
which endows the space of pseudo-differential operators with the structure of an associative algebra.\\

Let 
$$
L:=\d_x+\sum_{i\ge 1}f_i\d_x^{-i}.
$$
The \emph{KP hierarchy} is the system of evolutionary PDEs with dependent variables $f_i$ defined by
$$
\frac{\d L}{\d T_n}=[(L^n)_+,L],\qquad n\ge 1.
$$
\begin{example}\label{example:L2}
Using that
\begin{gather*}
L^2=\d_x^2+2f_1+\left(2f_2+f_1^{(1)}\right)\d_x^{-1}+\left(2f_3+f_1^2+f_2^{(1)}\right)\d_x^{-2}+\ldots,
\end{gather*}
we compute
\begin{align*}
&\frac{\d f_1}{\d T_2}=2f_2^{(1)}+f_1^{(2)},\\
&\frac{\d f_2}{\d T_2}=2f_3^{(1)}+2 f_1 f_1^{(1)}+f_2^{(2)}.
\end{align*}
\end{example}

\bigskip

We can extend the grading $\deg$ from the ring $\cR_f$ to the ring of pseudo-differential operators by assigning~$\deg\d_x:=1$. We then obtain $\deg L=1$ and therefore $\deg L^k=k$, $\deg [L^k_+,L]=k+1$, which implies that the equations of the KP hierarchy have the form 
$$
\frac{\d f_i}{\d T_k}=S_{i,k},\qquad S_{i,k}\in\cR_{f;\ge 1},
$$ 
where $\deg S_{i,k}=i+k+1$.

\bigskip

We also see that $\deg\res L^k=k+1$, for $k\ge 1$, and
$$
\frac{\d}{\d f_k}\res L^k=\sum_{a+b=k-1}\res(L^a\circ\d_x^{-k}\circ L^b)=k.
$$
Therefore, $\res L^k-k f_k$ depends only on the variables $f^{(l)}_a$ with $a\le k-1$, which implies that the polynomial change of variables $f_\alpha\mapsto w_\alpha(f_*^{(*)}):=\res L^\alpha$, $\alpha\ge 1$, is invertible. Note also that
$$
\int\frac{\d}{\d T_n}\res L^a dx=\int\res\left(\frac{\d}{\d T_n}L^a\right)dx=\int\res[(L^n)_+,L^a]dx=0,
$$
where the last equality follows from the fact that $\int\res[A,B]dx=0$ for any two pseudo-differential operators $A$ and $B$. As a result we obtain that the KP hierarchy written in the variables $w_\alpha$, $\alpha\ge 1$, has the form
\begin{gather}\label{eq:KP in w}
\frac{\d w_\alpha}{\d T_\beta}=\d_x R_{\alpha\beta},
\end{gather}
where
\begin{align}
&R_{\alpha\beta}\in\cR_{w;\ge 1},\label{eq:property1 of R}\\
&\deg R_{\alpha\beta}=\alpha+\beta,\label{eq:property2 of R}\\
&R_{\alpha,1}=R_{1,\alpha}=w_\alpha,\label{eq:property3 of R}\\
&R_{\alpha\beta}=R_{\beta\alpha}.\label{eq:property4 of R}
\end{align}

\begin{example}
Using Example~\ref{example:L2} we compute
\begin{gather*}
w_1=f_1,\qquad w_2=2f_2+f_1^{(1)},\qquad w_3=3f_3+3f_1^2+3f_2^{(1)}+f_1^{(2)},
\end{gather*}
and 
$$
\frac{\d w_2}{\d T_2}=\d_x\left(\frac{4}{3}w_3-2w_1^2-\frac{1}{3}w_1^{(2)}\right).
$$
\end{example}

\bigskip

\subsection{The main result}

Note that putting $\eps=1$ gives an isomorphism $\hcR^{[0]}_v\stackrel{\cong}{\to}\cR_v$. Therefore, putting $\eps=1$ in the system~\eqref{eq:v-system} we don't lose any information about the equations.\\  

\begin{theorem}\label{theorem:main}
Consider the reduction of the DR hierarchy from Theorem~\ref{theorem:reduction} written in the variables~$v_a$ (the system~\eqref{eq:v-system}) and the KP hierarchy written in the variables~$w_a$ (the system~\eqref{eq:KP in w}). If we put $\eps=1$, then these two systems are related by the change of variables
\begin{gather}\label{eq:DR-KP change of variables}
v_\alpha=-\frac{1}{\alpha}w_\alpha,\qquad t^\beta=\beta T_\beta.
\end{gather}
\end{theorem}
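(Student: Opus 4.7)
The plan is to invoke the reconstruction theorem for the KP hierarchy established earlier in this section: a system of evolutionary PDEs in variables $w_\alpha$, $\alpha\ge 1$, satisfying commutativity of flows, homogeneity with respect to $\deg$, tau-symmetry and spatial translation compatibility $R_{\alpha,1}=w_\alpha$ coincides with the KP hierarchy \eqref{eq:KP in w} as soon as three prescribed coefficients in each component of its first nontrivial flow and the linear terms in the dispersionless limit of all other flows agree with those of KP. The proof then reduces to (i) showing that the change of variables \eqref{eq:DR-KP change of variables} (with $\eps=1$) transports the reduced DR system \eqref{eq:v-system} into a system enjoying these four structural properties, and (ii) matching the finite collection of coefficients dictated by the reconstruction theorem.

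Under \eqref{eq:DR-KP change of variables}, the reduction \eqref{eq:v-system} takes the form $\frac{\d w_\alpha}{\d T_\beta}=\d_x\widetilde R_{\alpha\beta}(w)$ with $\widetilde R_{\alpha\beta}(w):=-\alpha\beta\,Q_{\alpha\beta}\big|_{v_\gamma=-w_\gamma/\gamma}$, and properties \eqref{eq:property1}--\eqref{eq:property4} transfer directly: polynomiality in $\cR_{w;\ge 1}$ and $\deg\widetilde R_{\alpha\beta}=\alpha+\beta$ are immediate since the variables $v_\gamma$ and $w_\gamma$ share the same $\deg$, the symmetry $\widetilde R_{\alpha\beta}=\widetilde R_{\beta\alpha}$ is preserved under the rescaling, and the spatial translation property $\widetilde R_{\alpha,1}=\widetilde R_{1,\alpha}=w_\alpha$ follows from \eqref{eq:property3} combined with the factor $-\alpha\cdot 1\cdot(-1/\alpha)=1$. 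Commutativity of the $\d/\d T_\beta$ flows is inherited from Theorem~\ref{theorem:reduction}, and the tau-symmetry of the full DR hierarchy recalled in Section~\ref{section:DR hierarchy} descends to the reduction after verifying compatibility with the tau-structure.

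For step (ii), the linear terms in the dispersionless limit of $\d/\d T_\beta$ are concentrated in genus $0$ and in the one-variable truncation of the generating series \eqref{eq:polynomials P}, where $c_{0,3}(e_\alpha\otimes e_\beta\otimes e_\gamma)=\delta_{\alpha+\beta+\gamma,-2}$ together with the triviality of $\DR_0$ produce an explicit linear expression in the $u_\gamma$ that, after \eqref{eq:DR-KP change of variables}, matches the linear term of $\res L^\beta$ arising from the $\d_x^{\beta-1}$ coefficient of $L^\beta$. The three designated coefficients in the first nontrivial flow $\d/\d T_2$ (which corresponds to $\d/\d t^2$ via $t^2=2T_2$) are extracted from intersection numbers of $\lambda_g\psi_1^0$, $\DR_g$ and $[\oH_g^{\res}]$ on $\oM_{g,n+2}$ in low genus, using the splitting and unit axioms of Proposition~\ref{proposition:partial CohFT Hres}, the polynomiality of $\lambda_g\DR_g$ on $\cM^{\ct}_{g,n}$, and the explicit form of $\frac{\d w_\alpha}{\d T_2}$ read off from the expansion of $L^2$ as in Example~\ref{example:L2}.

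I expect the main obstacle to lie in carrying out this last matching explicitly for every component $\alpha$: although the reconstruction theorem brings the identification down to a finite computation per flow component, pinning down the three coefficients requires a carefully controlled intersection-theoretic calculation on $\oM_{g,n}$ simultaneously involving $\lambda_g$, $\DR_g$ and $[\oH_g^{\res}]$, and the precise signs and normalizations $-1/\alpha$ and $\beta$ appearing in \eqref{eq:DR-KP change of variables} should emerge as forced by this comparison.
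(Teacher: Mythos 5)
Your architecture is the same as the paper's: establish that both systems satisfy a common list of structural properties (commutativity, $\eps$-parity/polynomiality, homogeneity, symmetry $Q_{\alpha\beta}=Q_{\beta\alpha}$, the normalization $Q_{\alpha,1}=v_\alpha$), pin down a finite amount of initial data --- the linear term of every flow and three coefficients of each component of the $t^2$-flow --- and then invoke a uniqueness/reconstruction statement (Theorem~\ref{theorem:reconstruction}) to conclude that the two systems coincide after the rescaling~\eqref{eq:DR-KP change of variables}. The transfer of properties~\eqref{eq:property1}--\eqref{eq:property4} under $v_\alpha=-w_\alpha/\alpha$, $t^\beta=\beta T_\beta$ is handled correctly, and the identity $R_{\alpha\beta}=-\alpha\beta\,Q_{\alpha\beta}|_{v_\gamma^{(k)}=-w_\gamma^{(k)}/\gamma}$ is the right bookkeeping.

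However, there are two genuine gaps, both of which constitute the actual substance of the paper's argument. First, on the KP side you never establish that $R_{\alpha\beta}$ satisfies the hypotheses of the reconstruction theorem for general $\alpha$: the parity property $R_{\alpha\beta}\in\cR^\ev_w$ (which must match the $\eps$-evenness of the DR side) is not automatic and requires the anti-automorphism $L\mapsto -L^\dagger$ combined with the $\deg$-homogeneity; and the coefficients $\tfrac{\alpha\beta}{\alpha+\beta-1}$ in the linear term and $\tfrac{2\alpha}{\alpha+1}$, $-\tfrac{2\alpha}{\alpha-1}\tfrac{1}{1+\delta_{\alpha,2}}$, $-\tfrac{\alpha}{6}$ in $R_{\alpha,2}$ require residue computations such as $\tfrac{\d}{\d f_k}\res L^k=\sum_{a+b=k-1}\res(L^a\circ\d_x^{-k}\circ L^b)=k$ and the inversion $f_k(w^{(*)}_*)$ involving Bernoulli numbers; Example~\ref{example:L2} only covers $\alpha\le 2$ and cannot supply these. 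Second, on the DR side the three matching coefficients of $Q_{\alpha,2}$ (equivalently $P_{\alpha,2}$ and $P_{1,\alpha}$) are not computed: they reduce to $\int_{\oM_{0,4}}[\oH^{\res}_0(\alpha-1,1,-2,-\alpha)]=1$ and to two genus-one integrals of $\lambda_1$ against $[\oH^{\res}_1(\cdot)]$ over $\DR_1(a,0,-a)$, which require Lemma~\ref{lemma:one integral}, Hain's formula for $\DR_1(a,-a)|_{\cM^{\ct}_{1,2}}$, and the splitting axiom of Proposition~\ref{proposition:partial CohFT Hres}. You flag this matching as ``the main obstacle'' but do not carry it out; since the change of variables~\eqref{eq:DR-KP change of variables} is precisely what these computations are meant to force, leaving them out means the theorem is not actually proved. (Citing the reconstruction theorem itself is legitimate, as the paper states it as a result of independent interest, but its hypotheses are the full list~\eqref{eq:Q1}--\eqref{eq:Q6}, and you must verify every item on both sides, not only the four ``soft'' ones.)
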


\bigskip

The proof of the theorem is splitted in three steps.\\

\subsubsection{Step 1 of the proof: more properties of the DR hierarchy}\label{subsection:step1}

\begin{lemma}
The polynomials $P_{\alpha\beta}$ satisfy the following properties:
\begin{align}
&P_{\alpha,1}=u_\alpha,&& &&\alpha\ge 1,\label{eq:P1}\\
&P_{\alpha\beta}=u_{\alpha+\beta-1}+\tP_{\alpha\beta}\big(u^{(*)}_{\le\alpha+\beta-3},\eps\big), && \tP_{\alpha\beta}\in\hcR_{u;\ge 1}^{\ev;[0]},&&\alpha,\beta\ge 1,\label{eq:P2}\\
&P_{1,\alpha}=u_\alpha+\eps^2\frac{\alpha(\alpha-2)}{24}u^{(2)}_{\alpha-2}+\eps^2 P'_{1,\alpha}\left(u^{(*)}_{\le\alpha-3},\eps\right),&& P'_{1,\alpha}\in\hcR_{u;\ge 1}^{\ev;[2]},&&\alpha\ge 1,\label{eq:P3}\\
&P_{\alpha,2}=u_{\alpha+1}+\frac{u_1 u_{\alpha-1}}{1+\delta_{\alpha,2}}+\frac{\eps^2}{24}u^{(2)}_{\alpha-1}+P'_{\alpha,2}\left(u^{(*)}_{\le\alpha-2},\eps\right),&& P'_{\alpha,2}\in\hcR_{u;\ge 1}^{\ev;[0]},&&\alpha\ge 1,\label{eq:P4}
\end{align}
where we adopt the convention $u^{(*)}_i:=0$ for $i\le 0$.
\end{lemma}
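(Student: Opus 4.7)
The plan is to combine three tools: (i) the grading constraints on $\hcR_{u;\ge 1}^{\ev;[0]}$ derived in Theorem~\ref{theorem:reduction}; (ii) the unit axiom of the partial CohFT of Proposition~\ref{proposition:partial CohFT Hres}, which via the projection formula reduces the integrals to pairings on $\oM_{g,n+1}$ whenever one of the $\oH^{\res}$-indices equals~$0$; and (iii) \cite[Lemma~5.1]{BDGR18} together with Hain's formula to control the polynomial dependence of $\lambda_g\pi_*\DR_g(-\sum a_i,0,a_1,\ldots,a_n)$ on the weights~$a_i$.

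For~\eqref{eq:P1}, the second $\oH^{\res}$-index $\beta-1=0$ is the unit, so $[\oH^{\res}_g(\alpha-1,0,-\alpha_1-1,\ldots)]=\pi^*[\oH^{\res}_g(\alpha-1,-\alpha_1-1,\ldots)]$ with $\pi\colon\oM_{g,n+2}\to\oM_{g,n+1}$ forgetting the second point; the projection formula rewrites the integral as $\int_{\oM_{g,n+1}}\lambda_g\pi_*\DR_g(-\sum a_i,0,a_1,\ldots,a_n)\cdot[\oH^{\res}_g(\alpha-1,-\alpha_1-1,\ldots)]$, and by~\cite[Lemma~5.1]{BDGR18} this is a polynomial in the $a_i$ divisible by $(\sum a_i)^2$. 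Under the correspondence $\sum_ia_i\leftrightarrow\d_x$ built into the sum-over-monomials convention in the definition of $P_{\alpha,\beta}$, every contribution from $(g,n)\neq(0,1)$ thus lies in $\Im(\d_x^2)$; the $(g,n)=(0,1)$ contribution (where~$\pi$ is not defined) is evaluated directly on the point $\oM_{0,3}$ via $c_{0,3}(e_{\alpha-1}\otimes e_0\otimes e_{-\alpha_1-1})=\delta_{\alpha,\alpha_1}$, giving~$u_\alpha$. Combined with a vanishing argument for the residual pairing (see the main obstacle below), this yields $P_{\alpha,1}=u_\alpha$. For~\eqref{eq:P2} the shape $P_{\alpha\beta}=u_{\alpha+\beta-1}+\tP_{\alpha\beta}$ is forced by grading: every monomial $\eps^{2g}\prod_iu^{(k_i)}_{\alpha_i}$ in $\hcR^{\ev;[0]}_{u;\ge 1}$ of degree $\alpha+\beta$ satisfies $\sum k_i=2g$ (from $\deg_{\d_x}=0$) and $\sum(\alpha_i+1+k_i)=\alpha+\beta$, so a linear monomial is necessarily $u^{(2g)}_{\alpha+\beta-1-2g}$ (equal to $u_{\alpha+\beta-1}$ for $g=0$, of subscript~$\le\alpha+\beta-3$ otherwise), and any $n\ge 2$ factor monomial has each $\alpha_i\le\alpha+\beta-3$. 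The leading coefficient~$1$ of $u_{\alpha+\beta-1}$ comes from $(g,n)=(0,1)$ by direct computation: the polynomial differential $z^{\alpha-1}(z-1)^{\beta-1}\,dz$ on $(\mbCP^1;0,1,\infty)$ realizes the divisor and has vanishing residue at~$\infty$ because $\binom{\beta-1}{\alpha+\beta-1}=0$ for $\alpha\ge 1$.

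For~\eqref{eq:P3} and~\eqref{eq:P4} the same grading analysis leaves only a handful of coefficients to compute in low genus. The $u_1u_{\alpha-1}$ coefficient of $P_{\alpha,2}$ comes from $(g,n)=(0,2)$: fixing $x_1=0$, $x_3=\infty$, $x_4=1$, $x_2=t$ on $\oM_{0,4}\cong\mbCP^1$, the Laurent expansion of $z^{\alpha-1}(z-t)(z-1)^{-\alpha}\,dz$ around $z=1$ and $z=\infty$ shows both residues vanish precisely at $t=\alpha$, so $\int_{\oM_{0,4}}[\oH^{\res}_0(\alpha-1,1,-2,-\alpha)]=1$; the symmetry factor $1/(1+\delta_{\alpha,2})$ reflects whether the pair $(\alpha_1,\alpha_2)=(1,\alpha-1)$ has two or one representative in the sum. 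The $\eps^2$-coefficients in~\eqref{eq:P3} and~\eqref{eq:P4} come from the $(g,n,k_1)=(1,1,2)$ summand: in both cases the projection formula plus~\cite[Lemma~5.1]{BDGR18} reduce the integral (after, for~\eqref{eq:P3}, applying the unit axiom to the first $\oH^{\res}$-index) to pairing, on $\oM_{1,2}$ or $\oM_{1,3}$, the $a_1^2$-coefficient of $\lambda_1\pi_*\DR_1(-a_1,0,a_1)$ (a specific combination of $\lambda_1\psi$- and $\lambda_1\kappa$-classes, obtained from Hain's formula and the usual pushforward formulas for $\psi$ and boundary classes) against the genus-one residueless divisor $[\oH^{\res}_1(\alpha-1,-\alpha+1)]$ or $[\oH^{\res}_1(\alpha-1,1,-\alpha)]$; the two pairings yield $\alpha(\alpha-2)/24$ and $1/24$, respectively.

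The main obstacle is twofold. First, to make~\eqref{eq:P1} airtight one must supplement the divisibility by $(\sum a_i)^2$ with a vanishing statement for the residual pairing $\int_{\oM_{g,n+1}}\lambda_g\,Q(a)\cdot[\oH^{\res}_g(\alpha-1,-\alpha_1-1,\ldots)]$, where $Q(a)$ is the quotient $\lambda_g\pi_*\DR_g(-\sum a_i,0,a_1,\ldots,a_n)/(\sum a_i)^2$; the vanishing should follow from the concrete shape of $Q$ produced by Hain's formula, combined with dimension-count obstructions for the residueless H-class at $g=0$, $n\ge 2$. Second, the pair of genus-one intersection calculations for the $\eps^2$-coefficients in~\eqref{eq:P3} and~\eqref{eq:P4} requires a concrete compact-type description of $\lambda_1\pi_*\DR_1(-a_1,0,a_1)$ together with an explicit handle on the genus-one residueless divisors $[\oH^{\res}_1(\alpha-1,-\alpha+1)]$ and $[\oH^{\res}_1(\alpha-1,1,-\alpha)]$, for which the multiscale-differential picture of Proposition~\ref{proposition:moduli of multiscale} is the natural framework. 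The nontrivial $\alpha$-dependence $\alpha(\alpha-2)/24$ in~\eqref{eq:P3}, in particular, will require careful bookkeeping through the boundary stratification of $\oB^{\res}_1$.
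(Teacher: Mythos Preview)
Your outline has the right overall shape, but there is a genuine error in the argument for~\eqref{eq:P1}, and the two ``main obstacles'' you flag are both avoidable.

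\medskip

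\textbf{On~\eqref{eq:P1}.} You correctly apply the unit axiom to write $[\oH^{\res}_g(\alpha-1,0,\ldots)]=\pi^*[\oH^{\res}_g(\alpha-1,\ldots)]$ with $\pi$ forgetting the \emph{second} marked point, and then project the integral down to $\oM_{g,n+1}$. But you then invoke \cite[Lemma~5.1]{BDGR18} to claim $\lambda_g\,\pi_*\DR_g(-\sum a_i,0,a_1,\ldots,a_n)$ is divisible by $(\sum a_i)^2$. That lemma concerns the pushforward along the map forgetting a marked point of \emph{nonzero} DR weight; here you are forgetting the second point, which carries DR weight~$0$. The correct (and much simpler) observation is that a DR cycle with a zero weight at a marked point is the pullback of the DR cycle without that point, so $\DR_g(-\sum a_i,0,a_1,\ldots,a_n)=\pi^*\DR_g(-\sum a_i,a_1,\ldots,a_n)$ and hence $\pi_*\big(\lambda_g\DR_g(-\sum a_i,0,a_1,\ldots,a_n)\big)=0$ outright. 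This is exactly the paper's argument: for $\beta=1$ \emph{all} classes in the integrand are pullbacks via $\pi$, so the integral vanishes unless $(g,n)=(0,1)$. Your ``first main obstacle'' therefore disappears; no residual pairing analysis is needed.

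\medskip

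\textbf{On the genus-one coefficients in~\eqref{eq:P3} and~\eqref{eq:P4}.} You identify the two target integrals correctly but stop at ``careful bookkeeping through the boundary stratification of~$\oB^{\res}_1$''. The paper does not use the multiscale stratification here. Instead it proceeds by much more direct means: for $\int_{\oM_{1,2}}\lambda_1[\oH^{\res}_1(\alpha-1,-\alpha+1)]$, one notes that on a smooth elliptic curve the existence of a differential with divisor $a[x_1]-a[x_2]$ is equivalent to the existence of a function with that divisor, so $[\oH^{\res}_1(a,-a)]=\DR_1^{\adm}(a,-a)$, whose $\psi_1$-integral is $(a^2-1)/24$; combining with the relation $\lambda_1=\psi_1-\delta_0^{\{1,2\}}$ and the splitting axiom of the partial CohFT gives $\alpha(\alpha-2)/24$. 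For the second integral, Hain's formula for $\DR_1$ on compact type together with $\lambda_1^2=0$ and the splitting axiom reduces it to $\frac{a^2}{24}\int_{\oM_{0,4}}[\oH^{\res}_0(-2,\alpha-1,1,-\alpha)]=\frac{a^2}{24}$ (using your own genus-zero computation). So the ``second main obstacle'' is resolved by recognising the genus-one residueless locus as an admissible-cover DR cycle, not by working inside~$\oB^{\res}_1$.

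\medskip

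Your treatment of~\eqref{eq:P2} and of the $u_1u_{\alpha-1}$ coefficient in~\eqref{eq:P4} is correct and matches the paper (with a different but equivalent choice of affine chart on $\oM_{0,4}$).
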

\begin{proof}
Equation~\eqref{eq:P1} follows from \eqref{eq:polynomials P} where, for $\beta=1$, all the cycles involved in the integral over~$\oM_{g,n+2}$, are pull-backs via the morphism $\pi\colon\oM_{g,n+2}\to\oM_{g,n+1}$ forgetting the second marked point, unless $g=0$ and $n=1$, in which case the integral is over $\oM_{0,3}$ and all the nontrivial cycles involved equal $1$.\\

Equation~\eqref{eq:P2} follows from the fact that, on~$\oM_{0,3}$, all the nontrivial cycles involved in~\eqref{eq:polynomials P} equal $1$.\\

To prove equations~\eqref{eq:P3} and~\eqref{eq:P4}, we have to check that
\begin{align}
&\int_{\oM_{0,4}}\left[\oH^{\res}_0(\alpha-1,1,-2,-\alpha)\right]=1,&&\alpha\ge 2,\label{eq:4-point integral}\\
&\int_{\DR_1(a,0,-a)}\lambda_1\left[\oH^{\res}_1(0,\alpha-1,-\alpha+1)\right]=a^2\frac{\alpha(\alpha-2)}{24},&& \alpha\ge 3,\notag\\
&\int_{\DR_1(a,0,-a)}\lambda_1\left[\oH^{\res}_1(\alpha-1,1,-\alpha)\right]=\frac{a^2}{24},&& \alpha\ge 2.\label{eq:alpha2-integral}
\end{align}
Note that the second equation is equivalent to
\begin{gather}\label{eq:1alpha-integral}
\int_{\oM_{1,2}}\lambda_1\left[\oH^{\res}_1(\alpha-1,-\alpha+1)\right]=\frac{\alpha(\alpha-2)}{24},\qquad \alpha\ge 3,
\end{gather}
where we have used that 
$$
\left[\oH^{\res}_1(0,\alpha-1,-\alpha+1)\right]=\pi^*\left[\oH^{\res}_1(\alpha-1,-\alpha+1)\right],\qquad \pi_* (\lambda_1 \DR_1(a,0,-a))= a^2 \lambda_1,
$$
where $\pi\colon\oM_{1,3}\to \oM_{1,2}$ forgets the first marked point (see, e.g.,~\cite[Lemma~5.4]{BDGR18}). \\

We have two substantially different proofs of equations~\eqref{eq:4-point integral},~\eqref{eq:alpha2-integral},~\eqref{eq:1alpha-integral}, and we think that it is instructive to present both of them.\\

\underline{\it The first proof of equations~\eqref{eq:4-point integral},~\eqref{eq:alpha2-integral},~\eqref{eq:1alpha-integral}}. To prove equation~\eqref{eq:4-point integral}, let us describe the set $\cH^{\res}_0(\alpha-1,1,-2,-\alpha)\subset\cM_{0,4}$ explicitly. The moduli space $\cM_{0,4}$ is isomorphic to $\mbC\setminus\{0,1\}$, with an isomorphism sending a point $t\in\mbC\setminus\{0,1\}$ to the isomorphism class of the marked curve $(\mbCP^1;1,t,0,\infty)$. A unique, up to a multiplicative constant, meromorphic differential on $\mbCP^1$, whose divisor is $(\alpha-1)[1]+[t]-2[0]-\alpha[\infty]$, is given by $\omega=\frac{(z-1)^{\alpha-1}(z-t)}{z^2}dz$. Its residue at $0$ is equal to $(-1)^{\alpha-1}(1+(\alpha-1)t)$. Thus, the differential~$\omega$ is residueless if and only if $t=-\frac{1}{\alpha-1}$. We conclude that $\cH^{\res}_0(\alpha-1,1,-2,-\alpha)\subset\cM_{0,4}$ is a point. Therefore, $\oH^{\res}_0(\alpha-1,1,-2,-\alpha)\subset\oM_{0,4}$ is also a point, which proves~\eqref{eq:4-point integral}.\\

The proof of equations~\eqref{eq:alpha2-integral} and~\eqref{eq:1alpha-integral} is based on the following lemma.

\begin{lemma}\label{lemma:one integral}
We have $\int_{\oM_{1,2}}\psi_1\left[\oH^{\res}_1(a,-a)\right]=\frac{a^2-1}{24}$, $a\ge 1$.
\end{lemma}
\begin{proof}
Consider an arbitrary smooth elliptic curve~$C$ with two marked points~$x_1$ and~$x_2$. Since~$C$ carries a nowhere vanishing holomorphic differential, the fact that there exists a meromorphic differential $\omega$ on $C$ with $(\omega)=a[x_1]-a[x_2]$ is equivalent to the fact that there exists a meromorphic function $f$ on $C$ with $(f)=a[x_1]-a[x_2]$. Therefore, $\left[\oH^{\res}_1(a,-a)\right]$ coincides with the version of the double ramification cycle defined using admissible coverings rather than relative stable maps (see, e.g.,~\cite[Section~2.3]{BSSZ15} and~\cite{Ion02}), which we denote by $\DR_1^{\adm}(a,-a)$. The fact $\int_{\oM_{1,2}}\psi_1\left[\DR^{\adm}_1(a,-a)\right]=\frac{a^2-1}{24}$ follows, for example, from~\cite[Theorem~6]{BSSZ15}.
\end{proof}

\bigskip

For $I\subset[n]$ and $0\le h\le g$ denote by $\delta^I_{h}\in H^2(\oM_{g,n})$ the class of the closure of the substack of stable curves from $\oM_{g,n}$ having exactly one node separating a genus~$h$ component carrying the points marked by~$I$ and the genus~$g-h$ component carrying the points marked by~$[n]\setminus I$.\\

For~\eqref{eq:1alpha-integral} we compute
\begin{align*}
\int_{\oM_{1,2}}\lambda_1&\left[\oH^{\res}_1(\alpha-1,-\alpha+1)\right]=\int_{\oM_{1,2}}\left(\psi_1-\delta_0^{\{1,2\}}\right)\left[\oH^{\res}_1(\alpha-1,-\alpha+1)\right]\stackrel{\substack{\text{Lemma~\ref{lemma:one integral}}\\\text{Proposition~\ref{proposition:partial CohFT Hres}}}}{=}\\
&\hspace{1.5cm}=\frac{\alpha(\alpha-2)}{24}-\left(\int_{\oM_{1,1}}\left[\oH^{\res}_1(0)\right]\right)\left(\int_{\oM_{0,3}}\left[\oH^{\res}_0(-2,\alpha-1,-\alpha+1)\right]\right)=\\
&\hspace{1.5cm}=\frac{\alpha(\alpha-2)}{24},
\end{align*}
where both integrals in the product in the second line vanish because of degree reasons.\\

To prove equation~\eqref{eq:alpha2-integral} we use Hain's formula~\cite[Theorem~11.1]{Hai13}
$$
\DR_1(a,-a)|_{\cM_{1,2}^{\ct}}=a^2\left(\frac{\lambda_1}{2}+\delta_0^{\{1,2\}}\right),
$$
which, together with the fact $\lambda_1^2=0$, gives
\begin{align}
\int_{\DR_1(a,0,-a)}\hspace{-1cm}\lambda_1\left[\oH^{\res}_1(\alpha-1,1,-\alpha)\right]=&a^2\int_{\oM_{1,3}}\hspace{-0.2cm}\lambda_1\left(\delta_0^{\{1,3\}}+\delta_0^{\{1,2,3\}}\right)\left[\oH^{\res}_1(\alpha-1,1,-\alpha)\right]\stackrel{\text{Proposition~\ref{proposition:partial CohFT Hres}}}{=}\notag\\
=&a^2\left(\int_{\oM_{1,1}}\lambda_1\left[\oH^{\res}_1(0)\right]\right)\left(\int_{\oM_{0,4}}\left[\oH^{\res}_0(-2,\alpha-1,1,-\alpha)\right]\right).\label{product of integrals}
\end{align}
Since any smooth elliptic curve carries a nowhere vanishing holomorphic differential, we have $\cH_1^{\res}(0)=\cM_{1,1}$ and, therefore, $\left[\oH_1^{\res}(0)\right]=1\in H^0(\oM_{1,1})$. Since $\int_{\oM_{1,1}}\lambda_1=\frac{1}{24}$, the expression in line~\eqref{product of integrals} is equal to $\frac{a^2}{24}\int_{\oM_{0,4}}\left[\oH^{\res}_0(-2,\alpha-1,1,-\alpha)\right]=\frac{a^2}{24}$ by~\eqref{eq:4-point integral}. \\

\underline{\it The second proof of equations~\eqref{eq:4-point integral},~\eqref{eq:alpha2-integral},~\eqref{eq:1alpha-integral}}. Equation~\eqref{eq:4-point integral} follows from \cite[Propositions~8.2 and~8.3]{CMZ20}, based in turn on \cite[Theorem 6(1),(3)]{Sau19}, where, for $g,n,k\geq 0$, $m\geq 2$, such that $2g-2+n+m+k>0$, and integers $a_1,\ldots, a_n\geq 0$, $b_1,\ldots,b_m \geq 1$, $c_1,\ldots,c_k\geq 2$, the authors computed the class of the moduli stack
$$
\oH_g(a_1,\ldots,a_n,-b_1,\ldots,-b_m;-c_1,\ldots,-c_k)
$$
inside the moduli stack of projectivized meromorphic differentials with one less residue condition,
$$
\oH_g(a_1,\ldots,a_n,-b_1,\ldots,-b_{m-1};-b_m,-c_1,\ldots,-c_k),
$$
as a linear combination of psi classes and boundary divisors. According to that formula, $\left[\oH_0^{\res}(\alpha-1,1,-2,-\alpha)\right]=(\alpha-1)\psi_4-(\alpha-2)\delta^{\{1,3\}}_0=\psi_4$, which immediately yields the desired result.\\

Equations~\eqref{eq:alpha2-integral} and~\eqref{eq:1alpha-integral} follow from~\cite[equation (31)]{FP18}, which, for $a_1,\ldots,a_n\in \mbZ$ with at least one negative entry, computes the discrepancy between the class $[\oH_g(a_1,\ldots,a_n)]$ and the weighted fundamental class $\mathsf{H}_g(a_1\dots,a_n)$ of the moduli space of twisted canonical divisors $\widetilde{\cH}_g(a_1,\ldots,a_n)$. As, by the results of \cite{BHPSS20}, $\mathsf{H}_g(a_1\dots,a_n)$ equals the $1$-twisted DR cycle $\DR_g^1(a_1,\ldots,a_n)$ of \cite{JPPZ17}, in particular one obtains
\begin{align*}
&[\oH_1(\alpha-1,-\alpha+1)]= \DR_1^1(\alpha-1,-\alpha+1) - \delta_0^{\{1,2\}}, & \alpha\ge 3,\\
&[\oH_1(\alpha-1,1,-\alpha)]= \DR_1^1(\alpha-1,1,-\alpha) - \delta_0^{\{1,2,3\}}, & \alpha\geq 2.
\end{align*} 
Since the $1$-twisted DR cycle $\DR_1^1(a_1,\ldots,a_n)$ equals the untwisted DR cycle $\DR_1(a_1,\ldots,a_n)$ in genus $1$ via geometric arguments, a simple application of Hain's formula yields both desired results.
\end{proof}

\bigskip

\begin{example}
The lemma fully determines several polynomials $P_{\alpha\beta}$:
\begin{gather*}
P_{1,2}=u_2,\qquad P_{1,3}=u_3+\frac{\eps^2}{8}u_1^{(2)},\qquad P_{2,2}=u_3+\frac{u_1^2}{2}+\frac{\eps^2}{24}u_1^{(2)}.
\end{gather*}
\end{example}

\bigskip

Recall that the polynomials $Q_{\alpha\beta}$ satisfy the following properties:
\begin{align}
&Q_{\alpha\beta}\in\hcR_{v;\ge 1}^{\ev;[0]},\label{eq:Q1}\\
&\deg Q_{\alpha\beta}=\alpha+\beta,\label{eq:Q2}\\
&Q_{\alpha,1}=Q_{1,\alpha}=v_\alpha,\label{eq:Q3}\\
&Q_{\alpha\beta}=Q_{\beta\alpha}.\label{eq:Q4}
\end{align}
The lemma implies that we also have 
\begin{align}
&Q_{\alpha\beta}=v_{\alpha+\beta-1}+\tQ_{\alpha\beta}\big(v^{(*)}_{\le\alpha+\beta-3},\eps\big), && \tQ_{\alpha\beta}\in\hcR_{v;\ge 1}^{\ev;[0]},\label{eq:Q5}\\
&Q_{\alpha,2}=v_{\alpha+1}+\frac{v_1 v_{\alpha-1}}{1+\delta_{\alpha,2}}-\frac{\alpha-1}{12}v^{(2)}_{\alpha-1}\eps^2+Q'_{\alpha,2}\big(v^{(*)}_{\le\alpha-2},\eps\big), && Q'_{\alpha,2}\in\hcR^{\ev;[0]}_{v;\ge 1}.\label{eq:Q6}
\end{align}

\bigskip 

\subsubsection{Step 2 of the proof: more properties of the KP hierarchy}\label{subsection:step2}

\begin{lemma}\label{lemma:KP1}
We have $R_{\alpha\beta}\in\cR^{\ev}_{w;\ge 1}$.
\end{lemma}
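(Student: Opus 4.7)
The plan is to exploit the formal-adjoint symmetry of the KP hierarchy to produce an involution on the $w$-variables that forces the parity. Recall that for a pseudo-differential operator $A=\sum a_n\d_x^n$, the formal adjoint $A^*:=\sum(-\d_x)^n\circ a_n$ is an anti-automorphism satisfying $(A_+)^*=(A^*)_+$ and $\res A^*=-\res A$ (the last identity follows by noting that only the $n=-1$ term of $A$ contributes to the $\d_x^{-1}$ coefficient of $A^*$, and with a minus sign). Setting $M:=-L^*$, the operator $M$ is again a KP Lax operator of the form $\d_x+\sum_{i\ge 1}g_i\d_x^{-i}$ with $g_i\in\cR_f$, since $(\d_x)^*=-\d_x$.

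Applying $*$ to the KP equation $\d L/\d T_n=[(L^n)_+,L]$, using the anti-automorphism property together with $(L^n)^*=(L^*)^n$ and $((L^n)_+)^*=((L^*)^n)_+$, and then substituting $L^*=-M$, one obtains $\d M/\d T_n=(-1)^{n+1}[(M^n)_+,M]$. Defining the rescaled times $T'_n:=(-1)^{n+1}T_n$, the Lax operator $M$ thus satisfies the standard KP hierarchy in the times $T'_n$. Since the universal polynomials $R_{\alpha\beta}$ depend only on the KP structure, setting $\tilde w_\alpha:=\res M^\alpha$ yields $\d\tilde w_\alpha/\d T'_\beta=\d_x R_{\alpha\beta}(\tilde w)$. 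A direct computation using $\res A^*=-\res A$ gives $\tilde w_\alpha=(-1)^{\alpha+1}w_\alpha$.

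Rewriting the $M$-flow in terms of $w$ and $T$, the left-hand side picks up a factor $(-1)^{\alpha+1}\cdot(-1)^{\beta+1}=(-1)^{\alpha+\beta}$. Since $R_{\alpha\beta}\in\cR_{w;\ge 1}$ has no constant term by \eqref{eq:property1 of R} and $\d_x$ is injective on the subspace of polynomials without constant term, cancelling $\d_x$ yields the identity $R_{\alpha\beta}\bigl((-1)^{k+1}w_k\bigr)=(-1)^{\alpha+\beta}R_{\alpha\beta}(w_k)$ in $\cR_w$. For any monomial $c\prod_i w_{k_i}^{(l_i)}$ of $R_{\alpha\beta}$, the substitution $w_k\mapsto(-1)^{k+1}w_k$ (acting identically on all jets $w_k^{(l)}$) multiplies it by $(-1)^{\sum_i(k_i+1)}$. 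By \eqref{eq:property2 of R} we have $\sum_i(k_i+1+l_i)=\alpha+\beta$, so $\sum_i(k_i+1)=\alpha+\beta-\deg_{\d_x}(\text{monomial})$. The displayed identity therefore forces $(-1)^{\deg_{\d_x}}=1$ on every monomial, i.e., every monomial has even $\deg_{\d_x}$, giving $R_{\alpha\beta}\in\cR^{\ev}_{w;\ge 1}$.

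The main obstacle is careful sign bookkeeping in the adjoint manipulation, since three sources of signs (the anti-automorphism on commutators, the $(-1)^n$ from $(-M)^n=(-1)^n M^n$, and the $-1$ in $\res A^*=-\res A$) must combine precisely to produce the rescaling $T'_n=(-1)^{n+1}T_n$ on times and the sign $(-1)^{\alpha+1}$ on the residues. As a sanity check, the formula $R_{2,2}=\frac{4}{3}w_3-2w_1^2-\frac{1}{3}w_1^{(2)}$ computed in the paper's example has all monomials of even $\deg_{\d_x}$ (namely $0$ and $2$), consistent with this proof.
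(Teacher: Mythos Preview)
Your proof is correct and follows essentially the same route as the paper: both use the formal adjoint involution (the paper writes $\dagger$ for your $*$), set $\tL=-L^\dagger$ (your $M$), compute that the residues transform as $\tw_\alpha=(-1)^{\alpha+1}w_\alpha$ while the flows pick up $(-1)^{\beta+1}$, and then combine the resulting sign identity with the homogeneity $\deg R_{\alpha\beta}=\alpha+\beta$ to conclude that every monomial has even $\deg_{\d_x}$. The only cosmetic difference is that you package the $(-1)^{\beta+1}$ as a time rescaling $T'_n=(-1)^{n+1}T_n$, whereas the paper writes two expressions for the transformed $\tR_{\alpha\beta}$ and equates them; the content is identical.
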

\begin{proof}
There is an involution on the space of pseudo-differential operators given by 
$$
\left(\sum_{n=-\infty}^m a_n\d_x^n\right)^\dagger:=\sum_{n=-\infty}^m (-\d_x)^n\circ a_n.
$$
It satisfies the properties $(A\circ B)^\dagger=B^\dagger\circ A^\dagger$ and $\res A^\dagger=-\res A$ for any two pseudo-differential operators $A$ and $B$.\\

Consider the change of variables $f_i\mapsto \tf_i(f^{(*)}_*)$ given by
\begin{align*}
&L=\d_x+\sum_{i\ge 1}f_i\d_x^{-i}\,\mapsto\\
\mapsto\,&\tL=\d_x+\sum_{i\ge 1}\tf_i(f^{(*)}_*)\d_x^{-i}:=-L^\dagger=\d_x+f_1\d_x^{-1}+(-f_2-f_1^{(1)})\d_x^{-2}+(f_3+2f_2^{(1)}+f_1^{(2)})\d_x^{-3}+\ldots.
\end{align*}
It is clearly invertible and it induces a change of variables $w_\alpha\mapsto \tw_\alpha(w^{(*)}_*)$, for which we compute
$$
\tw_\alpha(w^{(*)}_*)=\res\tL^a=(-1)^a\res(L^\dagger)^a=(-1)^a\res(L^a)^\dagger=(-1)^{a+1}\res L^a=(-1)^{a+1} w_a.
$$
Therefore, the KP hierarchy written in the variables $\tw_\alpha$ has the form
\begin{gather}\label{eq:KP in tw}
\frac{\d\tw_\alpha}{\d T_\beta}=\d_x\tR_{\alpha\beta},\qquad\tR_{\alpha\beta}=(-1)^{\alpha+1}\left.R_{\alpha\beta}\right|_{w_\gamma^{(k)}=(-1)^{\gamma+1}\tw_\gamma^{(k)}}\in\cR_{\tw;\ge 1}.
\end{gather}

\bigskip

On the other hand, we compute
$$
\frac{\d\tL}{\d T_\beta}=-\left(\frac{\d L}{\d T_\beta}\right)^\dagger=-[(L^\beta)_+,L]^\dagger=\left[\left((L^\dagger)^\beta\right)_+,L^\dagger\right]=(-1)^{\beta+1}\left[(\tL^\beta)_+,\tL\right],
$$
and, therefore, $\frac{\d\tw_\alpha}{\d T_\beta}=(-1)^{\beta+1}\res\left[(\tL^\beta)_+,\tL^\alpha\right]$. Hence, $\tR_{\alpha\beta}=\left.(-1)^{\beta+1}R_{\alpha\beta}\right|_{w^{(k)}_\gamma=\tw^{(k)}_\gamma}$. Combining this with~\eqref{eq:KP in tw} we obtain $\left.(-1)^{\alpha+\beta}R_{\alpha\beta}\right|_{w^{(k)}_\gamma\mapsto(-1)^{\gamma+1}w^{(k)}_\gamma}=R_{\alpha\beta}$. Together with the property $\deg R_{\alpha\beta}=\alpha+\beta$ this implies that $\left.R_{\alpha\beta}\right|_{w^{(k)}_\gamma\mapsto(-1)^k w^{(k)}_\gamma}=R_{\alpha\beta}$, which gives $R_{\alpha\beta}\in\cR_w^\ev$, as required.
\end{proof}

\bigskip

\begin{lemma}\label{lemma:KP2}
Let $k\ge 1$.
\begin{enumerate}[ 1.]
\item The coefficients of the pseudo-differential operator $L^k-\d_x^k-\sum_{i\ge 1}\sum_{l=0}^{k-1}{k\choose l}f_i^{(k-1-l)}\d_x^{-i+l}$ belong to the ring $\cR_{f;\ge 2}$.

\medskip

\item $\displaystyle S_{i,k}=\sum_{j=1}^k{k \choose j} f_{i+k-j}^{(j)}+\tS_{i,k}\big(f^{(*)}_{\le i+k-3}\big)$, where $\tS_{i,k}\in\cR_{f;\ge 2}$.

\medskip

\item $\displaystyle w_k(f^{(*)}_*)=\sum_{i=0}^{k-1}{k\choose k-1-i}f_{k-i}^{(i)}+\frac{k(k-1)}{1+\delta_{k,3}}f_1 f_{k-2}+T_k\big(f_{\le k-3}^{(*)}\big)$, where $T_k\in\cR_{f;\ge 2}$.

\medskip

\item $\displaystyle f_k(w^{(*)}_*)=
\begin{cases}
\frac{1}{k}\sum_{j=0}^{k-1}{k\choose j}\cB_j w_{k-j}^{(j)},&\text{if $k\le 2$},\\
\frac{1}{k}\sum_{j=0}^{k-1}{k\choose j}\cB_j w_{k-j}^{(j)}-\frac{1}{1+\delta_{k,3}}\frac{k-1}{k-2}w_1w_{k-2}+K_k\big(w^{(*)}_{\le k-3}\big),&\text{if $k\ge 3$},
\end{cases}
$, where $K_k\in\cR_{w;\ge 2}$ and we recall that $\cB_j$ are the Bernoulli numbers.
\end{enumerate}
\end{lemma}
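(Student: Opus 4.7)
\textbf{Plan for the proof of Lemma~\ref{lemma:KP2}.} The strategy is direct expansion of $L^k=(\d_x+F)^k$ with $F:=\sum_{i\ge 1}f_i\d_x^{-i}$, together with the combinatorics of Bernoulli numbers to handle the inversion in part~4. Throughout, the degree conditions (i.e.\ membership in $\cR_{f;\ge 2}$ or $\cR_{w;\ge 2}$, and the restriction to $f^{(*)}_{\le i+k-3}$, etc.) can be read off from the grading $\deg$ introduced before Example~\ref{example:L2}: since both sides of each identity are $\deg$-homogeneous, the possible monomials are severely constrained and the bounds on indices follow by bookkeeping.

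\emph{Parts 1 and 2.} Expanding $(\d_x+F)^k$ and collecting the linear-in-$F$ contribution gives $\sum_{p=1}^k \d_x^{p-1}\circ F\circ\d_x^{k-p}$. Inserting $F=\sum_i f_i\d_x^{-i}$ and applying the commutation rule $\d_x^{p-1}\circ f_i=\sum_{l=0}^{p-1}\binom{p-1}{l}f_i^{(l)}\d_x^{p-1-l}$, then swapping the order of summation and using the hockey-stick identity $\sum_{p=l+1}^{k}\binom{p-1}{l}=\binom{k}{l+1}$, one recovers the linear part stated in~(1) (after the reindexing $l\mapsto k-1-l$ and $\binom{k}{k-1-l}=\binom{k}{l+1}$). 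All remaining terms in the expansion are at least quadratic in the $f_i$'s, hence lie in $\cR_{f;\ge 2}$. For part~2, recall $S_{i,k}=\Coef_{\d_x^{-i}}[(L^k)_+,L]$. At linear order, $(L^k)_+$ is $\d_x^k$ plus terms with non-negative powers of $\d_x$, so its commutator with $\d_x$ contributes nothing to $\Coef_{\d_x^{-i}}$. Thus the linear part of $S_{i,k}$ equals $\Coef_{\d_x^{-i}}[\d_x^k,F]=\sum_{j=1}^{k}\binom{k}{j}f^{(j)}_{i+k-j}$, as claimed, and $\tS_{i,k}\in\cR_{f;\ge 2}$ by the same reasoning as in part~1. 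The restriction to $f^{(*)}_{\le i+k-3}$ is a consequence of $\deg$-homogeneity.

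\emph{Part 3.} Setting $i=1$ in part~1 gives $w_k=\res L^k=\sum_{l=0}^{k-1}\binom{k}{l+1}f_{l+1}^{(k-1-l)}+\text{(higher)}$, which is the stated linear expression after reindexing. For the quadratic term $\frac{k(k-1)}{1+\delta_{k,3}}f_1 f_{k-2}$, pick two positions $1\le p_1<p_2\le k$ into which to insert $f_a\d_x^{-a}$ and $f_b\d_x^{-b}$ respectively, placing $\d_x$ in the remaining $k-2$ positions, and isolate the contribution to the $\d_x^{-1}$ coefficient. A straightforward but attentive combinatorial count (using $\d_x^j\circ f_i=\sum_l\binom{j}{l}f_i^{(l)}\d_x^{j-l}$) shows that the only undifferentiated $f_1 f_{k-2}$ term surviving in $\res L^k$ comes from the pairs $(a,b)\in\{(1,k-2),(k-2,1)\}$, with a count of positions giving $k(k-1)$ for $k\ne 3$ and half of that (due to the symmetry $a=b=1=k-2$) when $k=3$. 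All other quadratic monomials involve $f_a$ with $a\le k-3$ and belong to $T_k\in\cR_{f;\ge 2}$.

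\emph{Part 4.} Invert the linear map $f\mapsto w$ given by part~3. Substituting the linear part of $w$ into $\tfrac{1}{k}\sum_{j=0}^{k-1}\binom{k}{j}\cB_j w_{k-j}^{(j)}$ and collecting the coefficient of $f_i^{(k-i)}$ reduces, via the binomial identity $\binom{k}{j}\binom{k-j}{i-1}=\binom{k}{i-1}\binom{k-i+1}{j}$, to $\binom{k}{i-1}\sum_{j=0}^{k-i}\binom{k-i+1}{j}\cB_j$. The classical recursion for the Bernoulli numbers $\sum_{j=0}^{m}\binom{m+1}{j}\cB_j=\delta_{m,0}$ forces this to equal $k\,\delta_{i,k}$, giving the stated linear part of $f_k$. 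For the quadratic correction with $k\ge 3$, feed the linear inverse back into the $\frac{k(k-1)}{1+\delta_{k,3}}f_1 f_{k-2}$ term from part~3: writing $f_1=w_1$ and $f_{k-2}=\tfrac{1}{k-2}w_{k-2}+\ldots$ gives the predicted $-\tfrac{1}{1+\delta_{k,3}}\tfrac{k-1}{k-2}w_1 w_{k-2}$ contribution, with the sign coming from the inversion and the $\delta_{k,3}$ coefficient inherited from part~3. All remaining terms depend on $w^{(*)}_{\le k-3}$ by $\deg$-counting and go into $K_k\in\cR_{w;\ge 2}$.

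The main technical obstacle is verifying the quadratic claim in part~3 together with the symmetry factor $1+\delta_{k,3}$, since this is the only place a delicate combinatorial argument is required: one must enumerate the $p_1<p_2$ insertion points for the two $F$-factors, apply the commutation rule, and track exactly the monomials $f_1 f_{k-2}$ (without derivatives) that survive in the residue. Everything else in the lemma is either routine expansion (parts~1--2, linear part of~3) or a mechanical consequence of Bernoulli-number combinatorics (linear part of~4), with the quadratic part of~4 following automatically from the quadratic part of~3.
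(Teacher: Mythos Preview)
Your proof is correct and matches the paper's approach in overall structure: parts~1--2 by direct expansion (the paper simply says ``by induction'' for part~1, which is the same content), the linear half of part~3 read off from part~1, and part~4 via the Bernoulli recursion $\sum_{j=0}^{m}\binom{m+1}{j}\cB_j=\delta_{m,0}$. The only genuine divergence is in extracting the quadratic coefficient in part~3. You enumerate the $\binom{k}{2}$ insertion positions for two $F$-factors and count contributions from the ordered pairs $(1,k-2)$ and $(k-2,1)$; the paper instead differentiates, writing
\[
\frac{\d\res L^k}{\d f_{k-2}}=\sum_{a+b=k-1}\res\bigl(L^a\circ\d_x^{-(k-2)}\circ L^b\bigr)=k(k-1)f_1
\]
(modulo derivative terms and lower-index variables), which delivers the coefficient in one line and makes the $1+\delta_{k,3}$ factor automatic, since for $k=3$ this is a second derivative in $f_1$. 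Both routes are valid; the paper's derivative trick is a bit slicker, while your position count is more hands-on.
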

\begin{proof}
{\it 1}. This can be easily proved by induction.\\

{\it 2}. Using the first part we see that, up to terms from~$\cR_{f;\ge 2}$, the coefficient of $\d_x^{-i}$, $i\ge 1$, in $[(L^k)_+,L]$ is equal to the coefficient of $\d_x^{-i}$ in $[\d_x^k,\sum_{j\ge 1}f_j\d_x^{-j}]$, from which we get the required formula for $S_{i,k}$.\\

{\it 3}. The formula for the linear part of $w_k(f^{(*)}_*)=\res L^k$ immediately follows from the first part of the lemma. In order to determine the coefficient of~$f_1f_{k-2}$, for $k\ge 3$, we compute 
$$
\frac{\d\res L^k}{\d f_{k-2}}=\sum_{a+b=k-1}\res\left(L^a\circ\d_x^{-k+2}\circ L^b\right)=k(k-1)f_1.
$$

\bigskip

{\it 4}. The formula for the linear part of~$f_k(w^{(*)}_*)$ follows from the previous part and the standard property of the Bernoulli numbers: $\sum_{j=0}^a{a+1\choose j}\cB_j=\delta_{a,0}$, $a\ge 0$. The coefficient of $w_1w_{k-2}$ is found from the previous part by an elementary computation.
\end{proof}

\bigskip

The last two lemmas imply that
$$
R_{\alpha\beta}=\frac{\alpha\beta}{\alpha+\beta-1}w_{\alpha+\beta-1}+\widetilde{R}_{\alpha\beta}\big(w^{(*)}_{\le\alpha+\beta-3}\big),\qquad \tR_{\alpha\beta}\in\cR^\ev_{w;\ge 1}.
$$

\bigskip

\begin{lemma}\label{lemma:KP3}
{\ }
\begin{enumerate}[ 1.]
\item For $k\ge 1$ we have $\displaystyle S_{k,2}=2f_{k+1}^{(1)}+f_k^{(2)}+2(k-1)f_{k-1}f_1^{(1)}+S_{k,2}'\big(f^{(*)}_{\le k-2}\big)$, where $S'_{k,2}\in\cR_{f;\ge 2}$.

\medskip

\item For $k\ge 2$ we have $\displaystyle R_{k,2}=\frac{2k}{k+1}w_{k+1}-\frac{1}{1+\delta_{k,2}}\frac{2k}{k-1}w_1 w_{k-1}-\frac{k}{6} w^{(2)}_{k-1}+R'_{k,2}\big(w^{(*)}_{\le k-2}\big)$, where $R'_{k,2}\in\cR^\ev_{w;\ge 1}$. 
\end{enumerate}
\end{lemma}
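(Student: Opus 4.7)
\textbf{Part 1} is a direct pseudo-differential computation. I would expand $(L^2)_+ = \d_x^2 + 2f_1$ as in Example~\ref{example:L2} and read off the coefficient of $\d_x^{-k}$ in $[(L^2)_+,L] = [\d_x^2, L]+[2f_1, L]$. The identity $[\d_x^2, f_i \d_x^{-i}] = 2f_i^{(1)} \d_x^{-i+1} + f_i^{(2)} \d_x^{-i}$ gives the linear contribution $2f_{k+1}^{(1)} + f_k^{(2)}$. The expansion $\d_x^{-i} \circ f_1 = \sum_{l\ge 0}\binom{-i}{l} f_1^{(l)} \d_x^{-i-l}$ shows that $[2f_1, f_i \d_x^{-i}]$ contributes the quadratic terms $-2f_i \binom{-i}{l} f_1^{(l)} \d_x^{-i-l}$ for $l\ge 1$; selecting $i+l=k$ with $i,l\ge 1$ and isolating $i=k-1$, $l=1$ yields $2(k-1) f_{k-1} f_1^{(1)}$ via $\binom{-(k-1)}{1} = -(k-1)$, while all other pairs produce monomials in $f^{(*)}_{\le k-2}$ that feed into $S'_{k,2}\in\cR_{f;\ge 2}$.

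For \textbf{Part 2}, I would first exploit degree and parity constraints. Since $\deg R_{k,2} = k+2$ and $R_{k,2}\in\cR^\ev_{w;\ge 1}$ by Lemma~\ref{lemma:KP1}, the only monomials of total degree $k+2$ involving $w^{(*)}_\alpha$ with $\alpha \ge k-1$ and an even number of $\d_x$'s are $w_{k+1}$, $w_{k-1}^{(2)}$ and $w_1 w_{k-1}$. Hence $R_{k,2} = \frac{2k}{k+1}w_{k+1} - B_k w_{k-1}^{(2)} - A_k w_1 w_{k-1} + R'_{k,2}(w^{(*)}_{\le k-2})$ for two constants $A_k$, $B_k$ to be pinned down, the leading coefficient $\frac{2k}{k+1}$ being the one already identified just after Lemma~\ref{lemma:KP2}.

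To determine $B_k$, I would linearize $\frac{\d w_k}{\d T_2} = \d_x R_{k,2}$ in $f$. Substituting the linear part of $w_k$ from Lemma~\ref{lemma:KP2}(3) and the linear part $2f_{i+1}^{(1)} + f_i^{(2)}$ of $S_{i,2}$ from Part 1, the left-hand side becomes an $f$-polynomial whose coefficients are simple binomial sums. Rewriting the right-hand side in $f$ via the same linear change of variables and matching the coefficient of $f_{k-1}^{(3)}$ yields, after an elementary binomial manipulation, $B_k = k/6$.

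For $A_k$, I would match the quadratic $f$-terms of type $f_1 f_{k-1}^{(*)}$ in $\d_x R_{k,2}$. Three sources contribute on the left-hand side: the explicit term $2(k-1) f_{k-1} f_1^{(1)}$ inside $S_{k,2}$; Leibniz expansions of $\d_x^j S_{k-j,2}^{\text{lin}}$ produced by differentiating the quadratic piece $\frac{k(k-1)}{1+\delta_{k,3}} f_1 f_{k-2}$ of $w_k$ from Lemma~\ref{lemma:KP2}(3); and $\d_x$ applied to the quadratic parts of $w_{k+1}$ and $w_{k-1}^{(2)}$ sitting inside the already-determined portion of $R_{k,2}$. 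Summing these with the correct Leibniz signs and isolating $A_k$ should yield the claimed $A_k = \frac{2k}{(k-1)(1+\delta_{k,2})}$, the Kronecker delta arising because $w_1 w_{k-1}$ collapses to $w_1^2$ when $k=2$. The main obstacle will be precisely this quadratic bookkeeping, since the nonlinear part of the change $f \leftrightarrow w$ and the quadratic part of $S_{k,2}$ interact nontrivially; the known identity $R_{2,2} = \frac{4}{3}w_3 - 2w_1^2 - \frac{1}{3} w_1^{(2)}$ provides a sanity check on both $B_k$ and $A_k$.
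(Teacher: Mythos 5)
Your proposal is correct. For Part 1 you take a slightly different, more self-contained route than the paper: you expand the coefficient of $\d_x^{-k}$ in $[(L^2)_+,L]$ directly (the linear piece from $[\d_x^2,\cdot]$ and the quadratic piece from $[2f_1,\cdot]$ via $\d_x^{-i}\circ f_1=\sum_l\binom{-i}{l}f_1^{(l)}\d_x^{-i-l}$), whereas the paper first constrains the shape of $S_{k,2}$ by $\deg$ and Lemma~\ref{lemma:KP2} and then pins down the quadratic coefficients by computing $\tfrac{\d}{\d f_1}[L^2_+,L]$ and $\tfrac{\d}{\d f_{k-1}}[L^2_+,L]$; both arguments are valid and yield $2(k-1)f_{k-1}f_1^{(1)}$ with remainder in $\cR_{f;\ge 2}$ depending only on $f^{(*)}_{\le k-2}$. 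For Part 2 the paper gives no details ("an elementary computation based on the first part and Lemma~\ref{lemma:KP2}"), and your plan is exactly the intended one: degree plus the evenness from Lemma~\ref{lemma:KP1} reduce the unknowns to the coefficients of $w_{k-1}^{(2)}$ and $w_1w_{k-1}$ (killing $w_k^{(1)}$), and the two matchings you propose do close — the $f_{k-1}^{(3)}$ coefficient gives $2\binom{k}{3}+\binom{k}{2}=\frac{2k}{k+1}\binom{k+1}{3}-B_k(k-1)$, i.e. $B_k=k/6$, and the $f_1^{(a)}f_{k-1}^{(b)}$, $a+b=1$, terms give $2k(k-1)=2k^2-A_k(k-1)$, i.e. $A_k=\frac{2k}{k-1}$, consistent with $R_{2,2}=\frac43 w_3-2w_1^2-\frac13 w_1^{(2)}$. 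One small slip in the write-up: the quadratic contributions of $w_{k+1}$ and $w_{k-1}^{(2)}$ that you list among the "left-hand side" sources in fact sit on the right-hand side $\d_x R_{k,2}$; this does not affect the outcome.
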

\begin{proof}
{\it 1}. From Lemma~\ref{lemma:KP2} and the property $\deg S_{k,2}=k+2$ we conclude that 
$$
S_{k,2}=2f_{k+1}^{(1)}+f_k^{(2)}+\alpha f_1 f_{k-1}^{(1)}+\beta f_1^{(1)}f_{k-1}+S_{k,2}'\big(f^{(*)}_{\le k-2}\big),\qquad S'_{k,2}\in\cR_{f;\ge 2}.
$$
In order to determine $\alpha$ and $\beta$ we compute
\begin{align*}
&\frac{\d S_{k,2}}{\d f_1}=\Coef_{\d_x^{-k}}\frac{\d}{\d f_1}[L^2_+,L]=\Coef_{\d_x^{-k}}[\d_x^2+2f_1,\d_x^{-1}]=2(-1)^k f_1^{(k-1)}, && k\ge 2,\\
&\frac{\d S_{k,2}}{\d f_{k-1}}=\Coef_{\d_x^{-k}}\frac{\d}{\d f_{k-1}}[L^2_+,L]=\Coef_{\d_x^{-k}}[\d_x^2+2f_1,\d_x^{-(k-1)}]=2(k-1)f_1^{(1)}, && k\ge 2,
\end{align*}
which implies the required formula for $S_{k,2}$.\\

{\it 2}. This is an elementary computation based on the first part and Lemma~\ref{lemma:KP2}.
\end{proof}

\bigskip

Summarizing our computations with the KP hierarchy, we have
\begin{align}
&R_{\alpha\beta}\in\cR^\ev_{w;\ge 1},&& &&   \label{eq:R1}\\
&\deg R_{\alpha\beta}=\alpha+\beta,&& &&     \label{eq:R2}\\
&R_{\alpha,1}=R_{1,\alpha}=w_\alpha,&& &&    \label{eq:R3}\\
&R_{\alpha\beta}=R_{\beta\alpha},&& &&       \label{eq:R4}
\end{align}
\begin{align}
&R_{\alpha\beta}=\frac{\alpha\beta}{\alpha+\beta-1}w_{\alpha+\beta-1}+\widetilde{R}_{\alpha\beta}\big(w^{(*)}_{\le\alpha+\beta-3}\big), && \tR_{\alpha\beta}\in\cR^\ev_{w;\ge 1}, && \label{eq:R5}\\
&R_{\alpha,2}=\frac{2\alpha\,w_{\alpha+1}}{\alpha+1}-\frac{2\alpha}{\alpha-1}\frac{w_1 w_{\alpha-1}}{1+\delta_{\alpha,2}}-\frac{\alpha}{6} w^{(2)}_{\alpha-1}\eps^2+R'_{\alpha,2}\big(w^{(*)}_{\le \alpha-2}\big), && R'_{\alpha,2}\in\cR^\ev_{w;\ge 1}, && \alpha\ge 2.\label{eq:R6} 
\end{align}

\bigskip

\subsubsection{Step 3 of the proof: a limited amount of data determines the hierarchies uniquely}\label{subsection:step3}

It is clear that the change of variables~\eqref{eq:DR-KP change of variables} (together with putting $\eps=1$) transforms the properties~\eqref{eq:Q1}--\eqref{eq:Q6} of the system~\eqref{eq:v-system} exactly to the properties~\eqref{eq:R1}--\eqref{eq:R6} of the system~\eqref{eq:KP in w}. Thus, the following theorem will complete the proof of Theorem~\ref{theorem:main}.\\

\begin{theorem}\label{theorem:reconstruction}
The commutativity of the flows $\frac{\d}{\d t^\alpha}$ together with the properties \eqref{eq:Q1}--\eqref{eq:Q6} determines all the polynomials $Q_{\alpha\beta}$ uniquely.
\end{theorem}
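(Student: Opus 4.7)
We proceed by induction on $\beta$ with an inner induction on $\alpha$, using only the commutativity of the pair of flows $\frac{\d}{\d t^2}$ and $\frac{\d}{\d t^\beta}$. For $\beta=1$ and $\beta=2$, the polynomials $Q_{\alpha,\beta}$ are given outright by~\eqref{eq:Q3} and~\eqref{eq:Q6}. Fix $\beta\ge 3$; the inner base cases are $Q_{1,\beta}=v_\beta$ by~\eqref{eq:Q3} and $Q_{2,\beta}=Q_{\beta,2}$ by the symmetry~\eqref{eq:Q4} combined with~\eqref{eq:Q6}.

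For the inductive step let $\alpha\ge 3$ and assume that $Q_{\gamma,\beta}$ is already known for every $1\le\gamma\le\alpha-1$. Commutativity of $\frac{\d}{\d t^2}$ and $\frac{\d}{\d t^\beta}$ applied to the variable $v_{\alpha-1}$ reads
\begin{equation*}
\d_x \frac{\d Q_{\alpha-1,\beta}}{\d t^2} \;=\; \d_x \frac{\d Q_{\alpha-1,2}}{\d t^\beta}.
\end{equation*}
Every polynomial involved lies in $\hcR_{v;\ge 1}$, on which $\d_x$ has trivial kernel (its only kernel elements in $\hcR_v^{[0]}$ are the $v$-constants in $\mbC[\eps]$), so one may drop the outer $\d_x$. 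The chain rule then turns the equality into the polynomial identity
\begin{equation*}
\sum_{k\ge 0,\,\gamma\ge 1} \frac{\d Q_{\alpha-1,\beta}}{\d v_\gamma^{(k)}}\,\d_x^{k+1} Q_{\gamma,2} \;=\; \sum_{k\ge 0,\,\gamma\ge 1} \frac{\d Q_{\alpha-1,2}}{\d v_\gamma^{(k)}}\,\d_x^{k+1} Q_{\gamma,\beta}.
\end{equation*}
The decisive feature is that, by~\eqref{eq:Q6}, the variable $v_\alpha$ occurs in $Q_{\alpha-1,2}$ only once, as the linear leading term $v_\alpha$, while no derivative $v_\alpha^{(k)}$ with $k\ge 1$ appears at all. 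Hence the $\gamma=\alpha$ contribution to the right-hand side is exactly $\d_x Q_{\alpha,\beta}$, whereas the contributions with $\gamma<\alpha$ involve only polynomials $Q_{\gamma,\beta}$ with $\gamma\le\alpha-1$, which are known by the inductive hypothesis. The left-hand side depends solely on $Q_{\alpha-1,\beta}$ and on the $Q_{\gamma,2}$'s, all known. Thus $\d_x Q_{\alpha,\beta}$ is explicitly expressed in terms of known data, and a second application of the injectivity of $\d_x$ on $\hcR_{v;\ge 1}^{[0]}$ recovers $Q_{\alpha,\beta}$ uniquely.

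The only substantive point that needs to be verified is that the coefficient with which $\d_x Q_{\alpha,\beta}$ is extracted equals~$1$; this is immediate from $\frac{\d Q_{\alpha-1,2}}{\d v_\alpha}=1$, a direct reading of~\eqref{eq:Q6}. Everything else reduces to careful bookkeeping with the three gradings $\deg_{\d_x}$, $\deg$, and $\tdeg$, which guarantee that each chain-rule expansion is a finite sum and that the inductive hypotheses supply exactly the polynomials needed at every stage. Accordingly, no real obstacle arises beyond this combinatorial bookkeeping, and the double induction closes.
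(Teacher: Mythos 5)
Your inductive step for $\alpha,\beta\ge 3$ is essentially the paper's own first observation (the analogue of relation~\eqref{eq:relation for Q}): once the whole second row $\{Q_{\gamma,2}\}_{\gamma\ge 1}$ is known, the remaining $Q_{\alpha\beta}$ are determined recursively by extracting the $\gamma=\alpha$ term of the chain-rule expansion, using that $v_\alpha$ enters $Q_{\alpha-1,2}$ only through the leading linear term. That part of your argument is sound.

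The genuine gap is in your base case. You assert that for $\beta=2$ the polynomials $Q_{\alpha,2}$ are ``given outright by~\eqref{eq:Q6}.'' They are not: property~\eqref{eq:Q6} specifies only the terms $v_{\alpha+1}$, $\frac{v_1v_{\alpha-1}}{1+\delta_{\alpha,2}}$ and $-\frac{\alpha-1}{12}v^{(2)}_{\alpha-1}\eps^2$, and leaves an \emph{undetermined} remainder $Q'_{\alpha,2}\big(v^{(*)}_{\le\alpha-2},\eps\big)\in\hcR^{\ev;[0]}_{v;\ge 1}$. Only for $\alpha=2$ does this remainder vanish automatically (it would have to involve the variables $v^{(*)}_{\le 0}$, which are zero by convention), so the hypotheses pin down $Q_{2,2}$ but no other $Q_{\alpha,2}$. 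Consequently your inner base case $Q_{2,\beta}=Q_{\beta,2}$ is not available, and your inductive step also silently uses all the $Q_{\gamma,2}$ (on the left-hand side of your chain-rule identity, where $\gamma$ ranges up to $\alpha+\beta-2$). The entire difficulty of the theorem — and the bulk of the paper's proof — lies precisely in showing that commutativity of $\frac{\d}{\d t^2}$ and $\frac{\d}{\d t^\beta}$, combined with the constraints~\eqref{eq:Q1}--\eqref{eq:Q6}, forces each $Q_{\beta-1,2}$ to be the unique solution of a linear equation in terms of the earlier $Q_{\gamma,2}$; this requires the delicate analysis of the quadratic coefficients $\omega_{i,j}$ and the two-case uniqueness argument, none of which is replaced by anything in your proposal. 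As written, your argument establishes only the (comparatively easy) reduction of the general $Q_{\alpha\beta}$ to the second row, not the reconstruction of the second row itself.
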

\begin{proof}
We start with the following lemma.

\begin{lemma}
For any $\alpha,\beta\ge 1$ we have the following relation:
\begin{gather}\label{eq:relation for Q}
\d_x Q_{\alpha+1,\beta}=\d_x Q_{\alpha+\beta-1,2}+\sum_{i=1}^{\alpha+\beta-3}\sum_{j\ge 0}\frac{\d\tQ_{\alpha\beta}}{\d v^{(j)}_i}\d_x^{j+1}Q_{i,2}-\sum_{i=1}^{\alpha-1}\sum_{j\ge 0}\frac{\d\tQ_{\alpha,2}}{\d v^{(j)}_i}\d_x^{j+1}Q_{i,\beta}.
\end{gather}
\end{lemma}
\begin{proof}
The relation $\frac{\d}{\d t^2}\frac{\d v_\alpha}{\d t^\beta}=\frac{\d}{\d t^\beta}\frac{\d v_\alpha}{\d t^2}$ gives $\frac{\d}{\d t^2}\left(v^{(1)}_{\alpha+\beta-1}+\d_x\tQ_{\alpha\beta}\right)=\frac{\d}{\d t^\beta}\left(v^{(1)}_{\alpha+1}+\d_x\tQ_{\alpha,2}\right)$, which immediately implies~\eqref{eq:relation for Q}.
\end{proof}

Note that if $\alpha+\beta+1=d$, then the right-hand side of~\eqref{eq:relation for Q} contains only the polynomial~$Q_{d-2,2}$ together with the polynomials $Q_{\gamma\delta}$ with $\gamma+\delta\le d-1$. Therefore, relation~\eqref{eq:relation for Q} determines recursively all the polynomials $Q_{\alpha\beta}$ with $\alpha,\beta\ge 3$ starting from the polynomials~$Q_{\gamma,2}$.\\ 

We now have to show how to reconstruct the polynomials $Q_{\alpha,2}$, $\alpha\ge 2$, starting from the polynomial~$Q_{2,2}$, which, by~\eqref{eq:Q6}, is equal to
\begin{gather}\label{eq:Q22}
Q_{2,2}=v_3+\frac{v_1^2}{2}-\frac{\eps^2}{12}v_1^{(2)}.
\end{gather}
Let $\beta\ge 4$ and let us write relation~\eqref{eq:relation for Q} for $\alpha=2$:
\begin{align}
&\d_x Q_{3,\beta}=\d_x Q_{\beta+1,2}+\sum_{i=1}^{\beta-1}\sum_{j\ge 0}\frac{\d\tQ_{2,\beta}}{\d v^{(j)}_i}\d_x^{j+1}Q_{i,2}-\sum_{j\ge 0}\frac{\d\tQ_{2,2}}{\d v^{(j)}_1}v_\beta^{(j+1)}\,\stackrel{\text{\eqref{eq:Q22}}}{\Rightarrow}\notag\\
\Rightarrow\,& \d_x Q_{3,\beta}=\d_x Q_{\beta+1,2}+\sum_{i=1}^{\beta-1}\sum_{j\ge 0}\frac{\d\tQ_{2,\beta}}{\d v^{(j)}_i}\d_x^{j+1}Q_{i,2}-v_1 v_{\beta}^{(1)}+\frac{\eps^2}{12}v_\beta^{(3)}.\label{eq:relation1}
\end{align}
On the other hand, relation~\eqref{eq:relation for Q} also gives
\begin{gather}\label{eq:relation2}
\d_x Q_{\beta,3}=\d_x Q_{\beta+1,2}+\sum_{i=1}^{\beta-1}\sum_{j\ge 0}\frac{\d\tQ_{\beta-1,3}}{\d v^{(j)}_i}\d_x^{j+1}Q_{i,2}-\sum_{i=1}^{\beta-2}\sum_{j\ge 0}\frac{\d\tQ_{\beta-1,2}}{\d v^{(j)}_i}\d_x^{j+1}Q_{i,3}.
\end{gather}
Equating the right-hand sides of equations~\eqref{eq:relation1} and~\eqref{eq:relation2}, and cancelling the terms $\d_x Q_{\beta+1,2}$, we obtain
$$
\sum_{i=1}^{\beta-1}\sum_{j\ge 0}\frac{\d\tQ_{2,\beta}}{\d v^{(j)}_i}\d_x^{j+1}Q_{i,2}-v_1 v_{\beta}^{(1)}+\frac{\eps^2}{12}v_\beta^{(3)}=\sum_{i=1}^{\beta-1}\sum_{j\ge 0}\frac{\d\tQ_{\beta-1,3}}{\d v^{(j)}_i}\d_x^{j+1}Q_{i,2}-\sum_{i=1}^{\beta-2}\sum_{j\ge 0}\frac{\d\tQ_{\beta-1,2}}{\d v^{(j)}_i}\d_x^{j+1}Q_{i,3}.
$$
Using again relation~\eqref{eq:relation1} in order to express $\tQ_{\beta-1,3}=\tQ_{3,\beta-1}$ and $Q_{i,3}=Q_{3,i}$ in terms of the differential polynomials $Q_{\gamma,2}$, we obtain
\begin{align*}
&\underline{\sum_{i=1}^{\beta-1}\sum_{j\ge 0}\frac{\d\tQ_{2,\beta}}{\d v^{(j)}_i}\d_x^{j+1}Q_{i,2}}-v_1 v_{\beta}^{(1)}+\frac{\eps^2}{12}v_\beta^{(3)}=\\
=&\d_x^{-1}\left[\sum_{i=1}^{\beta-1}\sum_{j\ge 0}\frac{\d}{\d v^{(j)}_i}\left(\underline{\d_x\tQ_{\beta,2}}+\sum_{k=1}^{\beta-2}\sum_{l\ge 0}\frac{\d\tQ_{2,\beta-1}}{\d v^{(l)}_k}\d_x^{l+1}Q_{k,2}-v_1 v_{\beta-1}^{(1)}+\frac{\eps^2}{12}v_{\beta-1}^{(3)}\right)\d_x^{j+1}Q_{i,2}\right]\\
&-\sum_{i=1}^{\beta-2}\sum_{j\ge 0}\frac{\d\tQ_{\beta-1,2}}{\d v^{(j)}_i}\d_x^{j}\left(\d_x Q_{i+1,2}+\sum_{k=1}^{i-1}\sum_{l\ge 0}\frac{\d\tQ_{2,i}}{\d v^{(l)}_k}\d_x^{l+1}Q_{k,2}-v_1 v_{i}^{(1)}+\frac{\eps^2}{12}v_i^{(3)}\right),
\end{align*}
which, cancelling the underlined terms, is equivalent to
\begin{align*}
&-v_1 v_{\beta}^{(1)}+\frac{\eps^2}{12}v_\beta^{(3)}=\\
=&\d_x^{-1}\left[\sum_{i=1}^{\beta-1}\sum_{k=1}^{\beta-2}\sum_{j,l\ge 0}\frac{\d}{\d v^{(j)}_i}\left(\frac{\d\tQ_{2,\beta-1}}{\d v^{(l)}_k}\d_x^{l+1}Q_{k,2}\right)\d_x^{j+1}Q_{i,2}-v_2^{(1)} v_{\beta-1}^{(1)}-v_1 \d_x^2 Q_{\beta-1,2}+\frac{\eps^2}{12}\d_x^4 Q_{\beta-1,2}\right]\\
&-\sum_{i=1}^{\beta-2}\sum_{j\ge 0}\frac{\d\tQ_{\beta-1,2}}{\d v^{(j)}_i}\d_x^{j}\left(\d_x Q_{i+1,2}+\sum_{k=1}^{i-1}\sum_{l\ge 0}\frac{\d\tQ_{2,i}}{\d v^{(l)}_k}\d_x^{l+1}Q_{k,2}-v_1 v_{i}^{(1)}+\frac{\eps^2}{12}v_i^{(3)}\right).
\end{align*}

\bigskip

Splitting the two summations over $i$ and collecting $\frac{\eps^2}{12}v_\beta^{(3)}-\frac{\eps^2}{12}\d_x^3 Q_{\beta-1,2}=-\frac{\eps^2}{12}\d_x^3\tQ_{\beta-1,2}$, we obtain 
\begin{align*}
&-v_1 v_{\beta}^{(1)}-\frac{\eps^2}{12}\d_x^3\tQ_{\beta-1,2}=\\
=&\d_x^{-1}\Bigg[\sum_{i,k=1}^{\beta-2}\sum_{j,l\ge 0}\frac{\d}{\d v^{(j)}_i}\left(\frac{\d\tQ_{2,\beta-1}}{\d v^{(l)}_k}\d_x^{l+1}Q_{k,2}\right)\d_x^{j+1}Q_{i,2}+\underbrace{\sum_{k=1}^{\beta-2}\sum_{j,l\ge 0}\frac{\d}{\d v^{(j)}_{\beta-1}}\left(\frac{\d\tQ_{2,\beta-1}}{\d v^{(l)}_k}\d_x^{l+1}Q_{k,2}\right)\d_x^{j+1}Q_{\beta-1,2}}_{E:=}\\
&-v_2^{(1)} v_{\beta-1}^{(1)}-v_1 \d_x^2 Q_{\beta-1,2}\Bigg]\\
&-\sum_{i=1}^{\beta-3}\sum_{j\ge 0}\frac{\d\tQ_{\beta-1,2}}{\d v^{(j)}_i}\d_x^{j}\left(\d_x Q_{i+1,2}+\sum_{k=1}^{i-1}\sum_{l\ge 0}\frac{\d\tQ_{2,i}}{\d v^{(l)}_k}\d_x^{l+1}Q_{k,2}-v_1 v_{i}^{(1)}+\frac{\eps^2}{12}v_i^{(3)}\right)\\
&-\underbrace{\sum_{j\ge 0}\frac{\d\tQ_{\beta-1,2}}{\d v^{(j)}_{\beta-2}}\d_x^{j}\left(\d_x Q_{\beta-1,2}+\sum_{k=1}^{\beta-3}\sum_{l\ge 0}\frac{\d\tQ_{2,\beta-2}}{\d v^{(l)}_k}\d_x^{l+1}Q_{k,2}-v_1 v_{\beta-2}^{(1)}+\frac{\eps^2}{12}v_{\beta-2}^{(3)}\right)}_{F:=}.
\end{align*}
From this, computing $E$ and $F$ using formula~\eqref{eq:Q6},
\begin{align*}
E=&\sum_{k=1}^{\beta-2}\sum_{j,l\ge 0}\frac{\d\tQ_{2,\beta-1}}{\d v^{(l)}_k}\frac{\d(\d_x^{l+1}Q_{k,2})}{\d v^{(j)}_{\beta-1}}\d_x^{j+1}Q_{\beta-1,2}=\sum_{l\ge 0}\frac{\d\tQ_{2,\beta-1}}{\d v^{(l)}_{\beta-2}}\d_x^{l+2}Q_{\beta-1,2}=\\
=&v_1\d_x^2 Q_{\beta-1,2}-\eps^2\frac{\beta-2}{12}\d_x^4 Q_{\beta-1,2},
\end{align*}
\begin{align*}
F=&v_1\left(\d_x Q_{\beta-1,2}+\sum_{k=1}^{\beta-3}\sum_{l\ge 0}\frac{\d\tQ_{2,\beta-2}}{\d v^{(l)}_k}\d_x^{l+1}Q_{k,2}-v_1 v_{\beta-2}^{(1)}+\frac{\eps^2}{12}v_{\beta-2}^{(3)}\right)\\
&-\eps^2\frac{\beta-2}{12}\d_x^2\left(\d_x Q_{\beta-1,2}+\sum_{k=1}^{\beta-3}\sum_{l\ge 0}\frac{\d\tQ_{2,\beta-2}}{\d v^{(l)}_k}\d_x^{l+1}Q_{k,2}-v_1 v_{\beta-2}^{(1)}+\frac{\eps^2}{12}v_{\beta-2}^{(3)}\right),
\end{align*}
we obtain
\begin{align}
&\d_x^{-1}\Bigg[\sum_{i,k=1}^{\beta-2}\sum_{j,l\ge 0}\frac{\d}{\d v^{(j)}_i}\left(\frac{\d\tQ_{2,\beta-1}}{\d v^{(l)}_k}\d_x^{l+1}Q_{k,2}\right)\d_x^{j+1}Q_{i,2}-v_2^{(1)} v_{\beta-1}^{(1)}\Bigg]\label{eq:main relation for Qbeta2}\\
&-\sum_{i=1}^{\beta-3}\sum_{j\ge 0}\frac{\d\tQ_{\beta-1,2}}{\d v^{(j)}_i}\d_x^{j}\left(\d_x Q_{i+1,2}+\sum_{k=1}^{i-1}\sum_{l\ge 0}\frac{\d\tQ_{2,i}}{\d v^{(l)}_k}\d_x^{l+1}Q_{k,2}-v_1 v_{i}^{(1)}+\frac{\eps^2}{12}v_i^{(3)}\right)\notag\\
&-v_1\left(\d_x Q_{\beta-1,2}+\sum_{k=1}^{\beta-3}\sum_{l\ge 0}\frac{\d\tQ_{2,\beta-2}}{\d v^{(l)}_k}\d_x^{l+1}Q_{k,2}-v_1 v_{\beta-2}^{(1)}+\frac{\eps^2}{12}v_{\beta-2}^{(3)}\right)\notag\\
&+\eps^2\frac{\beta-2}{12}\d_x^{2}\left(\sum_{k=1}^{\beta-3}\sum_{l\ge 0}\frac{\d\tQ_{2,\beta-2}}{\d v^{(l)}_k}\d_x^{l+1}Q_{k,2}-v_1 v_{\beta-2}^{(1)}+\frac{\eps^2}{12}v_{\beta-2}^{(3)}\right)+v_1 v_{\beta}^{(1)}+\frac{\eps^2}{12}\d_x^3\tQ_{\beta-1,2}=0.\notag
\end{align}
In the rest of the proof, we will show how to use this relation in order to determine all the polynomials $Q_{\gamma,2}$, $\gamma\ge 1$.

\bigskip

For any $\gamma\ge 1$ introduce a polynomial $r_\gamma(v_1,\ldots,v_{\gamma-1})\in\cR_v$, $\tdeg r_\gamma=2$, as follows:
$$
Q_{\gamma,2}=v_{\gamma+1}+r_\gamma+\left(\text{monomials of $\tdeg\ge 3$}\right)+O(\eps^2).
$$
\begin{lemma}
We have $r_\gamma=\frac{1}{2}\sum_{i+k=\gamma}v_i v_k$.
\end{lemma}
\begin{proof}
We already know this for $\gamma=1,2$, so we need to prove it for $\gamma\ge 3$. Consider equation~\eqref{eq:main relation for Qbeta2}, where we recall that $\beta\ge 4$. Let $r_{i,k}:=\left.\frac{\d^2 Q_{\beta-1,2}}{\d v_i\d v_k}\right|_{v_*=0}$. Note that $r_{i,k}=r_{k,i}$, and that $r_{i,k}=0$ unless $i+k=\beta-1$. We know that $r_{1,\beta-2}=1$. Equation~\eqref{eq:main relation for Qbeta2} in particular means that
\begin{align*}
& \int\left(\sum_{i,k=1}^{\beta-2}\sum_{j,l\ge 0}\frac{\d}{\d v^{(j)}_i}\left(\frac{\d\tQ_{2,\beta-1}}{\d v^{(l)}_k}\d_x^{l+1}Q_{k,2}\right)\d_x^{j+1}Q_{i,2}-v_2^{(1)} v_{\beta-1}^{(1)}\right)dx=0\quad\Rightarrow\\
\Rightarrow\quad & \int\left(\sum_{i,k=1}^{\beta-2}\sum_{j\ge 0}\frac{\d}{\d v^{(j)}_i}\left(\frac{\d r_{\beta-1}}{\d v_k}v^{(1)}_{k+1}\right)v^{(j+1)}_{i+1}-v_2^{(1)} v_{\beta-1}^{(1)}\right)dx=0. 
\end{align*}
The last integral is equal to 
\begin{align*}
&\int\left(\sum_{i,k=1}^{\beta-2}\frac{\d^2 r_{\beta-1}}{\d v_i\d v_k}v_{i+1}^{(1)}v^{(1)}_{k+1}+\sum_{k=1}^{\beta-3}\frac{\d r_{\beta-1}}{\d v_k}v^{(2)}_{k+2}-v_2^{(1)} v_{\beta-1}^{(1)}\right)dx=\\
=&\int\left(\sum_{i=1}^{\beta-2}\sum_{k=1}^{\beta-3}r_{i,k}\left(v_{i+1}^{(1)}v^{(1)}_{k+1}-v_i^{(1)}v_{k+2}^{(1)}\right)\right)dx.
\end{align*}

\bigskip

Note that if for a quadratic polynomial $p$ in the variables $v_1^{(1)},\ldots,v_{\beta-2}^{(1)}$ we have $\int p dx=0$, then $p=0$. Therefore, we have
$$
0=\sum_{i=1}^{\beta-2}\sum_{k=1}^{\beta-3}r_{i,k}\left(v_{i+1}^{(1)}v^{(1)}_{k+1}-v_i^{(1)}v_{k+2}^{(1)}\right)=\sum_{i,k=2}^{\beta-3}(r_{i,k}-r_{i+1,k-1})v_{i+1}^{(1)}v^{(1)}_{k+1}+(r_{\beta-2,1}-r_{2,\beta-3})v_2^{(1)}v_{\beta-1}^{(1)},
$$
which implies 
\begin{align*}
&r_{1,\beta-2}=r_{2,\beta-3},\\
&r_{i+1,k-1}+r_{i-1,k+1}=2r_{i,k},\qquad 2\le i,k\le\beta-3,\quad i+k=\beta-1.
\end{align*}
Since $r_{1,\beta-2}=1$, this immediately gives that $r_{i,k}=1$ for $i+k=\beta-1$, as required.
\end{proof}

\bigskip

Consider relation~\eqref{eq:main relation for Qbeta2} and suppose that we know the polynomials $Q_{\gamma,2}$ for $\gamma\le \beta-2$. Then equation~\eqref{eq:main relation for Qbeta2} can be considered as a linear equation for the polynomial $Q_{\beta-1,2}$. Let us show that it has a unique solution (assuming of course that the properties~\eqref{eq:Q1}--\eqref{eq:Q6} are satisfied). This would determine all the polynomials~$Q_{\gamma,2}$ step by step starting from $Q_{2,2}=v_3+\frac{v_1^2}{2}-\frac{\eps^2}{12}v_1^{(2)}$. Suppose that equation~\eqref{eq:main relation for Qbeta2} has two solutions $Q_{\beta-1,2}\ne \widehat{Q}_{\beta-1,2}$. Then, if we denote $R:=Q_{\beta-1,2}-\widehat{Q}_{\beta-1,2}\ne 0$, the expression
\begin{align}
&\Bigg[\sum_{i,k=1}^{\beta-2}\sum_{j,l\ge 0}\frac{\d}{\d v^{(j)}_i}\left(\frac{\d R}{\d v^{(l)}_k}\d_x^{l+1}Q_{k,2}\right)\d_x^{j+1}Q_{i,2}\Bigg]\label{eq:function of R}\\
&+\d_x\left[-\sum_{i=1}^{\beta-3}\sum_{j\ge 0}\frac{\d R}{\d v^{(j)}_i}\d_x^{j}\left(\d_x Q_{i+1,2}+\sum_{k=1}^{i-1}\sum_{l\ge 0}\frac{\d\tQ_{2,i}}{\d v^{(l)}_k}\d_x^{l+1}Q_{k,2}-v_1 v_{i}^{(1)}+\frac{\eps^2}{12}v_i^{(3)}\right)-v_1 \d_x R+\frac{\eps^2}{12}\d_x^3 R\right]\notag
\end{align}
vanishes. Let us decompose $R=R_{2g}\eps^{2g}+O(\eps^{2g+2})$, where $g\ge 0$ and $R_{2g}\ne 0$. Let us further decompose $R_{2g}=A+B$, where $A\ne 0$, $\tdeg A=d\ge 1$, and $B\in\cR_{v;\ge d+1}$.\\

{\it \underline{Case 1: $d=1$}}. Since $Q_{\beta-1,2}$ and $\widehat{Q}_{\beta-1,2}$ have the form~\eqref{eq:Q6}, we have $g\ge 2$. Let us express the polynomial $R$ as follows:
$$
R=\left(\lambda v_{\beta-2g}^{(2g)}+\Omega+\left(\text{monomials of $\tdeg\ge 3$}\right)\right)\eps^{2g}+O(\eps^{2g+2}),\quad g\ge 2,\quad \beta\ge 2g+1,\quad\lambda\ne 0,
$$
where 
$$
\Omega=\frac{1}{2}\sum_{i=1}^{\beta-2g-2}\sum_{j=0}^{2g}\omega_{i,j}v_i^{(j)}v_{\beta-2g-1-i}^{(2g-j)},\quad \omega_{i,j}=\omega_{\beta-2g-1-i,2g-j}.
$$
Then the expression~\eqref{eq:function of R} has the form $\eps^{2g}(C+D)+O(\eps^{2g+2})$, where
\begin{align*}
C=&\sum_{i,k=1}^{\beta-2}\sum_{j,l\ge 0}\frac{\d^2\Omega}{\d v^{(j)}_i\d v^{(l)}_k}\left(v_{i+1}^{(j+1)}v_{k+1}^{(l+1)}-v_i^{(j+1)}v_{k+2}^{(l+1)}\right)\\
&+\lambda\left[\sum_{i=1}^{\beta-2}\sum_{j\ge 0}\frac{\d}{\d v_i^{(j)}}\left(\d_x^{2g+1}r_{\beta-2g}\right)v_{i+1}^{(j+1)}+\underline{\sum_{i=1}^{\beta-2}\sum_{j\ge 0}\frac{\d}{\d v_i^{(j)}}\left(v_{\beta-2g+1}^{(2g+1)}\right)\d_x^{j+1}r_i}\,\right]\\
&-\lambda\d_x^{2g+1}\left[\underline{\d_x r_{\beta-2g+1}}+\sum_{k=1}^{\beta-2g-1}\frac{\d r_{\beta-2g}}{\d v_k}v_{k+1}^{(1)}-v_1 v_{\beta-2g}^{(1)}\right]-\lambda\d_x\left(v_1 v_{\beta-2g}^{(2g+1)}\right)\in\cR_{v;2}
\end{align*}
and $D\in\cR_{v;\ge 3}$. Since~\eqref{eq:function of R} is equal to zero, we have $C=0$. The underlined terms cancel each other. Using the identity $\sum_{j\ge 0}\frac{\d(\d_x P)}{\d v_i^{(j)}}\d_x^j Q=\d_x\left(\sum_{j\ge 0}\frac{\d P}{\d v_i^{(j)}}\d_x^j Q\right)$, $P,Q\in\cR_v$, $i\ge 1$, we also compute
\begin{gather*}
\sum_{i=1}^{\beta-2}\sum_{j\ge 0}\frac{\d}{\d v_i^{(j)}}\left(\d_x^{2g+1}r_{\beta-2g}\right)v_{i+1}^{(j+1)}=\d_x^{2g+1}\left(\sum_{i=1}^{\beta-2}\frac{\d r_{\beta-2g}}{\d v_i}v_{i+1}^{(1)}\right).
\end{gather*}
As a result, 
\begin{gather*}
C=\sum_{i,k=1}^{\beta-2}\sum_{j,l\ge 0}\frac{\d^2\Omega}{\d v^{(j)}_i\d v^{(l)}_k}\left(v_{i+1}^{(j+1)}v_{k+1}^{(l+1)}-v_i^{(j+1)}v_{k+2}^{(l+1)}\right)+\lambda\left(\d_x^{2g+1}\left(v_1 v_{\beta-2g}^{(1)}\right)-\d_x\left(v_1 v_{\beta-2g}^{(2g+1)}\right)\right),
\end{gather*}
which, denoting $\gamma:=\beta-2g\ge 1$, we write as
\begin{align}
&\sum_{i=1}^{\gamma-2}\sum_{j=0}^{2g}\omega_{i,j}\left(v_{i+1}^{(j+1)}v_{\gamma-i}^{(2g-j+1)}-v_i^{(j+1)}v_{\gamma+1-i}^{(2g-j+1)}\right)+\lambda\left(\d_x^{2g+1}\left(v_1 v_{\gamma}^{(1)}\right)-\d_x\left(v_1 v_{\gamma}^{(2g+1)}\right)\right)=\notag\\
=&\sum_{i=1}^{\gamma-2}\sum_{j=0}^{2g}(\omega_{i,j}-\omega_{i+1,j})v_{i+1}^{(j+1)}v_{\gamma-i}^{(2g-j+1)}-\sum_{j=0}^{2g}\omega_{1,j}v_1^{(j+1)}v_{\gamma}^{(2g-j+1)}\notag\\
&+\lambda\left(\d_x^{2g+1}\left(v_1 v_{\gamma}^{(1)}\right)-\d_x\left(v_1 v_{\gamma}^{(2g+1)}\right)\right)=\notag\\
=&\frac{1}{2}\sum_{i=1}^{\gamma-2}\sum_{j=0}^{2g}(2\omega_{i,j}-\omega_{i+1,j}-\omega_{i-1,j})v_{i+1}^{(j+1)}v_{\gamma-i}^{(2g-j+1)}\label{eq:leading term,line 1}\\
&-\sum_{j=0}^{2g}\omega_{1,j}v_1^{(j+1)}v_{\gamma}^{(2g-j+1)}+\lambda\left(\d_x^{2g+1}\left(v_1 v_{\gamma}^{(1)}\right)-\d_x\left(v_1 v_{\gamma}^{(2g+1)}\right)\right),\label{eq:leading term,line 2}
\end{align}
where we adopt the convention $\omega_{i,j}:=0$ if $i\le 0$ or $i\ge \gamma-1$.\\

The expression in line~\eqref{eq:leading term,line 1} doesn't contain monomials of the form $v_1^{(i)}v_\gamma^{(j)}$ and, therefore, the expressions in lines~\eqref{eq:leading term,line 1} and~\eqref{eq:leading term,line 2} vanish:
\begin{align}
&2\omega_{i,j}-\omega_{i+1,j}-\omega_{i-1,j}=0,\qquad 1\le i\le \gamma-2,\quad 0\le j\le 2g, \label{eq:leading term,condition 1}\\
&\omega_{1,j}=
\begin{cases}
2g\lambda,&\text{if $j=0$},\\
{2g+1\choose j+1}\lambda,&\text{if $1\le j\le 2g$}.
\end{cases}\label{eq:leading term,condition 2}
\end{align}
If $\gamma=1$ or $\gamma=2$, then $\Omega=0$, and from~\eqref{eq:leading term,condition 2} we immediately get $\lambda=0$, which contradicts the assumption $\lambda\ne 0$. Suppose $\gamma\ge 3$. Solving relations~\eqref{eq:leading term,condition 1} step by step for $i=1,2,\ldots,\gamma-3$, we obtain $\omega_{i,j}=i\omega_{1,j}$ for $1\le i\le \gamma-2$. Then for $i=\gamma-2$ relation~\eqref{eq:leading term,condition 1} says that $0=2\omega_{\gamma-2,j}-\omega_{\gamma-3,j}=(\gamma-1)\omega_{1,j}$, which gives $\omega_{1,j}=0$ and hence all $\omega_{i,j}=0$. From relation~\eqref{eq:leading term,condition 2} we then obtain $\lambda=0$, which contradicts the assumption $\lambda\ne 0$.\\

{\it \underline{Case 2: $d\ge 2$}}. The expression~\eqref{eq:function of R} has the form $\eps^{2g}(C+D)+O(\eps^{2g+2})$, where 
\begin{gather}\label{eq:C-expression}
C=\sum_{i,k=1}^{\beta-2}\sum_{j,l\ge 0}\frac{\d^2 A}{\d v^{(j)}_i\d v^{(l)}_k}v_{i+1}^{(j+1)}v_{k+1}^{(l+1)}-\sum_{k=1}^{\beta-2}\sum_{l\ge 0}\d_x\left(\frac{\d A}{\d v^{(l)}_k}\right)v_{k+2}^{(l+1)}\in \cR_{v;d},
\end{gather}
and $D\in\cR_{v;\ge d+1}$. Since~\eqref{eq:function of R} is equal to zero, we have $C=0$. Let $k_0$ be the largest $k$ such that $\frac{\d A}{\d v_k^{(l)}}\ne 0$ for some $l=l_0$. Then from~\eqref{eq:C-expression} it is clear that 
$$
\frac{\d C}{\d v_{k_0+2}^{(l_0+1)}}=-\d_x\frac{\d A}{\d v_{k_0}^{(l_0)}}\ne 0,
$$
which contradicts the fact that $C=0$.
\end{proof}

\bigskip

\end{document}